\numberwithin{equation}{section}
\newtheorem{theorem}{Theorem}[section]
\newtheorem{corollary}[theorem]{Corollary}
\newtheorem{lemma}[theorem]{Lemma}
\newtheorem{proposition}[theorem]{Proposition}
\newtheorem{conjecture}[theorem]{Conjecture}
\newtheorem{problem}[theorem]{Problem}
\theoremstyle{definition}
\newtheorem{definition}[theorem]{Definition}
\newtheorem{remark}[theorem]{Remark}
\newcommand{\Log}{\operatorname{Log}}
\renewcommand{\Re}{\operatorname{Re}}
\newcommand{\dist}{\operatorname{dist}}
\newcommand{\sn}{\operatorname{sn}}
\newcommand{\cn}{\operatorname{cn}}
\newcommand{\dn}{\operatorname{dn}}
\newcommand{\R}{\mathbb{R}}
\newcommand{\C}{\mathbb{C}}
\newcommand{\Z}{\mathbb{Z}}
\newcommand{\Q}{\mathbb{Q}}
\newcommand{\bell}{\boldsymbol{\ell}}
\newcommand{\ba}{\mathbf{a}}
\newcommand{\bb}{\mathbf{b}}
\newcommand{\bc}{\mathbf{c}}
\newcommand{\bd}{\mathbf{d}}
\newcommand{\bm}{\mathbf{m}}
\newcommand{\bn}{\mathbf{n}}
\newcommand{\bs}{\mathbf{s}}
\newcommand{\bx}{\mathbf{x}}
\newcommand{\by}{\mathbf{y}}
\title[Flexible Polyhedra and Their Volumes]{Flexible Polyhedra and Their Volumes}
\author[Alexander A. Gaifullin]{Alexander A. Gaifullin\thanks{The work is partially supported by the Russian Foundation for Basic Research (grants 14-01-00537 and~16-51-55017).}}
\begin{document}

\begin{abstract}
We discuss some recent results on flexible polyhedra and the bellows conjecture, which claims that the volume of any flexible polyhedron is constant during the flexion.  Also, we survey main methods and several open problems in this area.
\end{abstract}

\begin{classification}
Primary 52C25; Secondary 51M25,13P15.
\end{classification}

\begin{keywords}
Flexible polyhedron, the bellows conjecture, Bricard octahedron, Sabitov polynomial.
\end{keywords}

\maketitle

\section{Three-dimensional flexible polyhedra}\label{section_three}

Before speaking on three-dimensional flexible polyhedra let us discuss briefly the two-dimensional case. Consider a  \textit{planar polygonal linkage}, i.\,e., a closed polygonal curve in plane that is allowed to be deformed so that the side lengths remain constant and the angles between consecutive sides change continuously. Equivalently, a planar polygonal linkage can be viewed as a  planar mechanism consisting of bars of fixed lengths connected in a cyclic order by revolving joints. A planar polygonal linkage is called a \textit{flexible polygon\/} if it admits deformations not induced by isometries of plane. 
Notice that we can consider both \textit{embedded} and \textit{self-intersecting} flexible polygons, see Fig.~\ref{fig_polygon}, (a) and~(b), respectively. Obviously, a triangle is rigid, and a generic polygon with at least four sides is flexible. Configuration spaces (or moduli spaces) of planar polygonal linkages were studied extensively by many mathematicians, see~\cite{CoDe04},~\cite{FaSc06}, and references therein.  

\begin{figure}[t]
\unitlength=3.8mm

\begin{center}
\begin{picture}(31,4)

\put(5,-0.5){a}
\put(22.6,-0.5){b}

\put(0,1){%
\begin{picture}(3,3)

\put(0,1.5){\circle*{.3}}
\put(1.5,0){\circle*{.3}}
\put(3,1.5){\circle*{.3}}
\put(1.5,3){\circle*{.3}}

\put(0,1.5){\line(1,1){1.5}}
\put(0,1.5){\line(1,-1){1.5}}
\put(3,1.5){\line(-1,1){1.5}}
\put(3,1.5){\line(-1,-1){1.5}}

\end{picture}%
}

\put(4,2.5){\vector(1,0){2}}
\put(6,2.5){\vector(-1,0){2}}

\put(7,2.5){%
\begin{picture}(3.8,2)

\put(0,0){\circle*{.3}}
\put(1.9,0.95){\circle*{.3}}
\put(1.9,-0.95){\circle*{.3}}
\put(3.8,0){\circle*{.3}}

\put(0,0){\line(2,1){1.9}}
\put(0,0){\line(2,-1){1.9}}
\put(3.8,0){\line(-2,1){1.9}}
\put(3.8,0){\line(-2,-1){1.9}}

\end{picture}%
}

\put(15,1){%
\begin{picture}(5,4)

\put(0,0){\circle*{.3}}
\put(1,3){\circle*{.3}}
\put(4,3){\circle*{.3}}
\put(5,0){\circle*{.3}}

\put(0,0){\line(1,3){1}}
\put(0,0){\line(4,3){4}}
\put(5,0){\line(-1,3){1}}
\put(5,0){\line(-4,3){4}}
\end{picture}%
}

\put(20.5,2.5){\vector(1,0){2}}
\put(22.5,2.5){\vector(-1,0){2}}

\put(23,2){%
\begin{picture}(8,1)

\put(0,0){\circle*{.3}}
\put(3,1){\circle*{.3}}
\put(5,1){\circle*{.3}}
\put(8,0){\circle*{.3}}

\put(0,0){\line(3,1){3}}
\put(0,0){\line(5,1){5}}
\put(8,0){\line(-3,1){3}}
\put(8,0){\line(-5,1){5}}

\end{picture}%
}

\end{picture}
\end{center}

\caption{(a) An embedded flexible polygon. (b) A self-intersecting flexible polygon}\label{fig_polygon}
\end{figure}
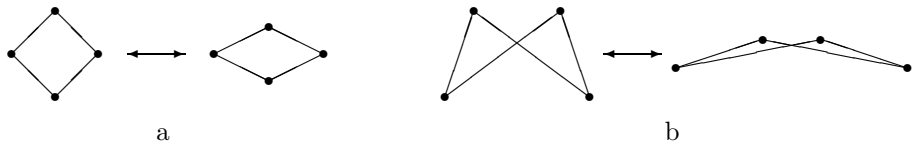

The concept of a flexible polygon can be generalised to higher dimensions in several ways. First, one can study polygonal linkages in Euclidean spaces of dimensions greater than two. For instance, beautiful results on geometry and topology of configuration spaces of polygonal linkages in~$\R^3$ were obtained in~\cite{Kly94}, \cite{KaMi96}, \cite{HaKn98}.  Second, one can study flexions of $k$-dimensional closed polyhedral surfaces in~$\R^n$, where $k<n$. In this paper we shall concern only the most rigid case, that is, the case of an $(n-1)$-dimensional  closed polyhedral surface in~$\R^n$. Besides, we shall consider only oriented polyhedral surfaces. An $(n-1)$-dimensional  oriented closed polyhedral surface~$S$ can be thought of as the boundary of an $n$-dimensional polyhedron in~$\R^n$.  This is completely true if the surface~$S$ is embedded. Nevertheless, if~$S$ is self-intersecting, then it can also be interpreted as the boundary of certain $n$-dimensional object, see Section~\ref{section_BC} for more details. In this paper,  under a \textit{polyhedron} in~$\R^n$ we always mean an $(n-1)$-dimensional  oriented closed polyhedral surface. A \textit{flexion} of a polyhedron is  a continuous deformation of it such that the combinatorial type of the polyhedron does not change under the deformation, each face of the polyhedron remains congruent to itself during the deformation, and the whole polyhedron  does not remain congruent to itself during the deformation.  

In this section we consider flexible polyhedra in three-space. They can be visualised as follows. Assume that faces of an oriented closed polyhedral surface are rigid plates and adjacent faces are connected by hinges at edges. If this mechanism admits nontrivial deformations, then it is a \textit{flexible polyhedron}. In spite of this mechanical point of view, we allow the polyhedral surface to be self-intersecting. However, embedded flexible polyhedra, which are called \textit{flexors}, are of a special interest. Polyhedra that admit no flexions are called \textit{rigid}.

We shall mostly work with simplicial polyhedra. A simplicial polyhedron in three-space is a polyhedron with triangular faces. A strict definition of  a (not necessarily embedded)
simplicial polyhedron is as follows. Let $K$ be an oriented closed two-dimensional simplicial manifold. A \textit{simplicial polyhedron} of combinatorial type~$K$ is a mapping $P\colon K\to\R^3$ whose restriction to every simplex of~$K$ is affine linear.  A polyhedron~$P$ is called \textit{non-degenerate} if   the restriction of~$P$ to every simplex of~$K$ is an embedding, and $K$ cannot be decomposed into the union of several subcomplexes $K_1,\ldots,K_s$ such that $\dim K_i=2$ for all~$i$ and $P(K_i\cap K_j)$ is contained in a line for any $i\ne j$. The latter condition is needed to exclude examples of polyhedra like the polyhedron in Fig.~\ref{fig_2tetr}, which is geometrically the union of two tetrahedra  that have a common edge and can rotate independently around it. In~\cite[Sect.~2]{Gai15a} the author introduced a weaker form (for $s=2$ only) of this condition. However, it is more natural to require this condition in full generality.

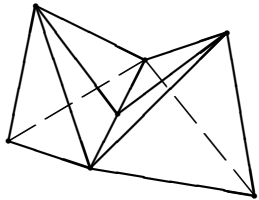
\begin{figure}
\unitlength=3.6mm
\begin{center}
\begin{picture}(9,7)

\put(0,2){\circle*{.2}}
\put(1,7){\circle*{.2}}
\put(3,1){\circle*{.2}}
\put(4,3){\circle*{.2}}
\put(5,5){\circle*{.2}}
\put(9,0){\circle*{.2}}
\put(8,6){\circle*{.2}}

\thinlines

\multiput(0,2)(1.35,0.81){4}{\line(5,3){1}}
\multiput(5,5)(1.08,-1.35){4}{\line(4,-5){.8}}

\thicklines

\put(0,2){\line(1,5){1}}
\put(0,2){\line(3,-1){3}}
\put(1,7){\line(1,-3){2}}
\put(1,7){\line(3,-4){3}}
\put(1,7){\line(2,-1){4}}
\put(3,1){\line(1,2){2}}
\put(3,1){\line(1,1){5}}
\put(3,1){\line(6,-1){6}}
\put(5,5){\line(3,1){3}}
\put(8,6){\line(1,-6){1}}
\put(4,3){\line(4,3){4}}

\end{picture}
\end{center}
\caption{A degenerate simplicial polyhedron}\label{fig_2tetr}
\end{figure}

According to a celebrated theorem of Cauchy~\cite{Cau13}, any convex polyhedron is rigid. Unlike the two-dimensional case, a generic polyhedron of any combinatorial type is rigid. This was proved by Gluck~\cite{Glu75} for three-dimensional polyhedra of topological type of sphere, and by Fogelsanger~\cite{Fog88} for arbitrary polyhedra of any dimension $n\ge 3$.  The first examples of flexible (self-intersecting) polyhedra were obtained by Bricard~\cite{Bri97}. All these flexible polyhedra were of combinatorial type of octahedron. 
Since any trihedral angle in three-space is rigid, it follows easily that all polyhedra with not more than~$5$ vertices are rigid, and all flexible polyhedra with $6$ vertices must have combinatorial type of octahedron. Hence Bricard's flexible octahedra are the simplest flexible polyhedra. (Here and below, polyhedra of combinatorial type of octahedron are  called octahedra.)
Bricard obtained a complete classification of flexible octahedra: He found three types (continuous families) of flexible octahedra, and proved that there are no other flexible octahedra. Bricard's octahedra of the first type and of the second type are symmetric about a line and about a plane, respectively, and Bricard's octahedra of the third type possess no symmetries and are called \textit{skew flexible octahedra}. In addition, Bricard proved that all flexible octahedra are self-intersecting. 

Let us consider in more details Bricard's octahedra of the first type, see Fig.~\ref{fig_Bricard}. The vertices of the octahedron are denoted by $\ba_1,\ba_2,\ba_3,\bb_1,\bb_2,\bb_3$ so that $[\ba_1\bb_1]$,  $[\ba_2\bb_2]$, and  $[\ba_3\bb_3]$ are the diagonals of the octahedron, and all other pairs of vertices are joined by edges of the octahedron. 
The line of symmetry~$l$ is the common perpendicular bisector of the diagonals $[\ba_1\bb_1]$,  $[\ba_2\bb_2]$, and  $[\ba_3\bb_3]$.
Any triple of vertices pairwise joined by edges span a face of the octahedron. In Fig.~\ref{fig_Bricard}, the visible edges, the invisible edges, and the diagonals of the octahedron are indicated by thick full lines, thin full lines, and dotted lines respectively, and the line of symmetry~$l$ is indicated by a dashed line. It is easy to see that Bricard's octahedra of the first type are self-intersecting. To explain why these octahedra  are flexible, we shall prove the following more general assertion.

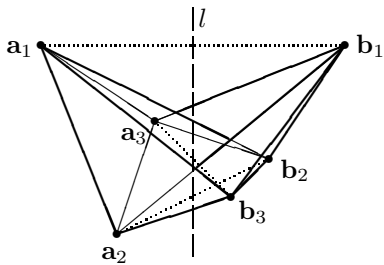
\begin{figure}
\unitlength=5mm
\begin{center}

\begin{picture}(8,6)
\put(4,0){%
\begin{picture}(8,5)
\put(-4,5){\circle*{.2}}
\put(4,5){\circle*{.2}}
\put(-1,3){\circle*{.2}}
\put(1,1){\circle*{.2}}
\put(-2,0){\circle*{.2}}
\put(2,2){\circle*{.2}}

\thinlines

\multiput(0,-.6)(0,.75){9}{\line(0,1){.6}}

\qbezier[65](-4,5)(0,5)(4,5)
\qbezier[31](-2,0)(0,1)(2,2)
\qbezier[21](-1,3)(0,2)(1,1)

\thicklines

\put(-4,5){\line(2,-5){2}}
\put(4,5){\line(-6,-5){3.88}}
\put(-2,0){\line(3,1){3}}
\put(1,1){\line(3,4){3}}
\put(1,1){\line(1,1){1}}
\put(2,2){\line(2,3){2}}
\put(-4,5){\line(2,-1){4.95}}
\put(-4,5){\line(5,-4){5}}
\put(4,5){\line(-5,-2){4.4}}

\thinlines

\put(2,2){\line(-2,1){1}}
\put(-4,5){\line(3,-2){3}}
\put(-2,0){\line(6,5){6}}
\put(4,5){\line(-5,-2){5}}
\put(-2,0){\line(1,3){1}}
\put(-1,3){\line(3,-1){3}}

\put(-4.9,4.8){$\ba_1$}
\put(-2.4,-.65){$\ba_2$}
\put(-1.9,2.5){$\ba_3$}
\put(4.3,4.8){$\bb_1$}
\put(2.3,1.5){$\bb_2$}
\put(1.2,.4){$\bb_3$}
\put(.15,5.5){$l$}
\end{picture}%
}
\end{picture}
\end{center}

\caption{Bricard's octahedron of the first type}\label{fig_Bricard}
\end{figure}

\begin{proposition}
A generic simplicial polyhedron in~$\R^3$ that has topological type of sphere and is symmetric about a line so that no vertex is symmetric to itself and no two vertices connected by an edge are symmetric to each other is flexible.
\end{proposition}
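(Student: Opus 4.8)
The plan is a dimension count: I will compare the dimension of the variety of \emph{line-symmetric} realizations of the combinatorial sphere~$K$ with the dimension of the orbit of one realization under isometries of~$\R^3$. The symmetry will buy exactly one extra dimension, and that dimension is the flexion.

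First I would read off the combinatorics of the symmetry. Let $\sigma$ be the half-turn about the axis and $\tau$ the induced automorphism of~$K$, so that $\sigma\circ P=P\circ\tau$ and $\tau^2=\mathrm{id}$. The hypotheses say precisely that $\tau$ fixes no vertex and no edge; since a $3$-element set admits no fixed-point-free involution, $\tau$ then fixes no face either, so $\tau$ acts freely on the vertices, edges and faces of~$K$. Writing $\bar V,\bar E,\bar F$ for the numbers of $\tau$-orbits of vertices, edges and faces, we get $\bar V=V/2$, $\bar E=E/2$, $\bar F=F/2$; and since $K$ is a triangulated sphere, so that $E=3V-6$, this yields the crucial relation $\bar E=3\bar V-3$. (The quotient $K/\tau$ is a triangulation of~$\mathbb{RP}^2$, but only the numerical relation is used. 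This is the one place where the topological type of a sphere is essential: for a surface of genus~$g\ge1$ one would instead get $\bar E=3\bar V-3+3g$, and the count below would break down.)

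Next I would introduce the two spaces to be compared. A realization of~$K$ symmetric about some line is uniquely determined by that line together with the positions in~$\R^3$ of one vertex from each $\tau$-orbit, the other vertices being recovered by the appropriate half-turn. As lines in~$\R^3$ form an irreducible $4$-dimensional variety, the set $\mathcal R$ of all such data is an irreducible variety of dimension $3\bar V+4$. A symmetric realization automatically satisfies $\ell_e=\ell_{\tau(e)}$, so the squared-edge-length function descends to a polynomial map $L\colon\mathcal R\to\R^{\bar E}$ with $\dim\R^{\bar E}=3\bar V-3$. Hence the generic fibre of~$L$ has dimension $\dim\mathcal R-\dim\overline{L(\mathcal R)}\ge(3\bar V+4)-(3\bar V-3)=7$, and by generic smoothness a generic fibre of~$L$ is in fact a smooth variety. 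On the other hand, isometries preserve edge lengths and take symmetric realizations to symmetric ones (symmetric about the moved axis), so for any symmetric realization~$P_0$ the congruence orbit $\mathrm{Isom}(\R^3)\cdot P_0$ lies inside the fibre $L^{-1}(L(P_0))$; and if $P_0$ affinely spans~$\R^3$ its isometry stabilizer is trivial, so this orbit is $6$-dimensional.

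Finally I would take a generic symmetric polyhedron~$P_0$: by genericity it affinely spans~$\R^3$, it is non-degenerate, and it lies on a smooth fibre of~$L$ of the generic fibre dimension $d\ge7$. A $d$-dimensional manifold with $d>6$ is not contained in the $6$-dimensional congruence orbit, so a curve through~$P_0$ inside $L^{-1}(L(P_0))$ yields a family of pairwise non-congruent symmetric realizations of~$K$ with the edge lengths of~$P_0$, all non-degenerate for small parameter since non-degeneracy is an open condition; such a curve is a flexion, so $P_0$ is flexible. The step I expect to require real work is the coordination of the various genericity conditions: one must know that the open conditions ``$P_0$ spans~$\R^3$'', ``$P_0$ is non-degenerate'' and ``$P_0$ lies on a smooth generic fibre of~$L$'' are simultaneously satisfiable --- in particular that the family of non-degenerate line-symmetric polyhedra of combinatorial type~$K$ is non-empty in the first place (for octahedra this is witnessed by Bricard's examples of the first type; in general one realizes~$K$ as the boundary of a convex polytope and symmetrizes) --- and that a common generic point of these loci automatically escapes the congruence orbit. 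Granting this bookkeeping, the dimension count completes the proof.
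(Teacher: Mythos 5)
Your proposal is correct and is essentially the paper's own argument: the paper does the same dimension count after normalizing the symmetry axis to the $z$-axis and one vertex to the $x$-axis, getting $3k-2$ coordinates against $3k-3$ edge-orbit length equations (via Euler's formula and the free action of the involution on vertices and edges), and then invoking the implicit function theorem to produce a non-constant edge-length-preserving curve. The only cosmetic difference is that you quotient by the $6$-dimensional isometry group at the end (comparing a fibre of dimension $\ge 7$ with the congruence orbit) instead of normalizing it away at the start.
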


\begin{proof}
Let $x,y,z$ be the standard Cartesian coordinates in~$\R^3$. Let $K$ be a two-dimensional simplicial sphere, and let $\phi$ be a simplicial automorphism of~$K$ such that, for each vertex $v$ of $K$, $\phi(v)\ne v$ and $[v\,\phi(v)]$ is not an edge of~$K$. Consider polyhedra $P\colon K\to\R^3$ symmetric about a line such that $P(\phi(v))$ is symmetric to~$P(v)$ for all~$v$. By an isometry of~$\R^3$, we may achieve that  the line of symmetry of~$P$ is the $z$-axis, and besides, the point~$P(u)$ lies on the $x$-axis,  where $u$ is certain chosen vertex of~$K$. Let $\Theta$ be the set of all polyhedra $P\colon K\to\R^3$ satisfying these conditions.  Our aim is to prove that a generic polyhedron in~$\Theta$ is flexible.

Suppose that $K$ has $2k$ vertices. Let $v_1=u$, $v_2,\ldots,v_k$ be pairwise different vertices of~$K$ such that, for each vertex~$v$, exactly one of the two vertices~$v$ and~$\phi(v)$ is in this list. Suppose that 
$P(u)=(x_1,0,0)$ and $P(v_i)=(x_i,y_i,z_i)$ for $i=2,\ldots,k$. Then $P(\phi(v_i))=(-x_i,-y_i,z_i)$ for all~$i$, where $y_1=z_1=0$. Therefore,
\begin{equation}\label{eq_coord}
x_1,x_2,y_2,z_2,x_3,y_3,z_3,\ldots,x_k,y_k,z_k
\end{equation}
are coordinates in~$\Theta$ that identify~$\Theta$ with $\R^{3k-2}$.

 Since $K$ is homeomorphic to a sphere, Euler's formula implies that $K$ has $6k-6$ edges. No edge of~$K$ is fixed setwise by~$\phi$. Let $\varepsilon_1,\ldots,\varepsilon_{3k-3}$ be representatives of all $3k-3$ pairs $\{\varepsilon, \phi(\varepsilon)\}$, where $\varepsilon$ are edges of~$K$. For every $j=1,\ldots,3k-3$, let~$q_j$ be the square of the length of~$P(\varepsilon_j)$. Then the square of the length of~$P(\phi(\varepsilon_j))$ is also equal to~$q_j$. Each~$q_j$ is a quadratic polynomial in $3k-2$ coordinates~\eqref{eq_coord}. Together these polynomials yield the polynomial mapping 
$$
\mathbf{q}\colon\Theta=\R^{3k-2}\to\R^{3k-3}.
$$
It follows easily from the implicit function theorem that a generic point $P\in\R^{3k-2}$ is contained in a non-constant smooth curve~$P_t$ in~$\R^{3k-2}$ such that $t$ runs over an interval $(-\alpha,\alpha)$, $P_0=P$, and $\mathbf{q}(P_t)=\mathbf{q}(P)$ for all~$t$.  This means that the edge lengths of~$P_t$ are constant as~$t$ varies. Hence $P_t$ is a flexible polyhedron.
\end{proof}

Bricard's octahedra of the second type are symmetric about a plane so that~$\ba_1$ is symmetric to~$\bb_1$, $\ba_2$ is symmetric to~$\bb_2$, and $\ba_3$ and~$\bb_3$ lie on the plane of symmetry. The proof of the flexibility  is completely similar. A geometric description of Bricard's octahedra of the third type is more complicated, and we shall not discuss it here, see~\cite{Bri97}, \cite{Ben12}. Nevertheless, in Section~\ref{section_high} we shall see that  the third type of Bricard's octahedra is the simplest one from the algebraic viewpoint. 

The first flexor (i.\,e., embedded flexible polyhedron)  in~$\R^3$ was constructed by Connelly~\cite{Con77} in 1977.  Soon after that a simpler $9$-vertex flexor was found by Steffen~\cite{Ste78}. Steffen's flexor and its unfolding are shown in Fig.~\ref{fig_Steffen}. Up to now, it is the simplest known flexor. As was mentioned above, Bricard's results imply that there are no flexors with $6$ vertices. Maximov~\cite{Max06} proved that  there are no flexors with $7$ vertices, and made some progress towards the non-existence of flexors with $8$ vertices. Nevertheless, the following problem is still open.

\begin{figure}
\begin{center}
\includegraphics[scale=0.2]{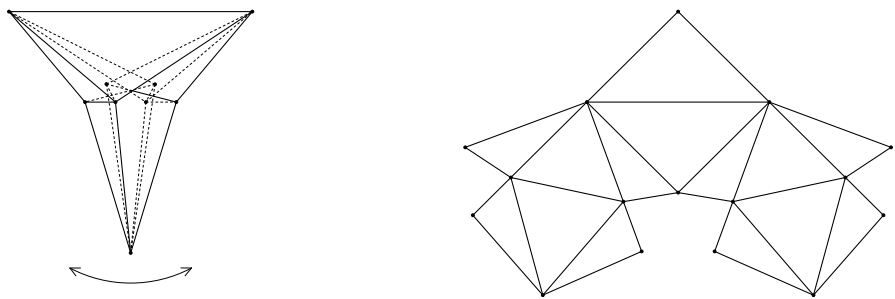}
\end{center}
\caption{Steffen's flexor and its unfolding}\label{fig_Steffen}
\end{figure}

\begin{problem}
Do there exist flexors in~$\R^3$ with $8$ vertices?
\end{problem}

Connelly's flexor and Steffen's flexor have topological type of sphere. Certainly, flexors of higher genera can be obtained by attaching rigid polyhedral handles  to these flexors. Such trivial examples are not interesting. An interesting problem was to construct flexors (or at least self-intersecting flexible polyhedra) of higher genera that are indecomposable in the sense that they cannot be decomposed into polyhedral surfaces of smaller genera with boundaries  such that these boundaries remain rigid during the flexions of the initial polyhedron.  The first example of an indecomposable self-intersecting flexible polyhedron of genus~$1$ was constructed by Alexandrov~\cite{Ale95}. Recently, Shtogrin~\cite{Sht15} constructed  flexors~$P^{(g)}_t$ of all genera~$g$ such that all but the one dihedral angle of~$P^{(g)}_t$ change non-trivially as $t$ runs over any interval. This property provides that the flexors~$P^{(g)}_t$ are indecomposable. 

There is a very strange phenomenon of the existence of a  constant dihedral angle in a flexor. Namely, each of the known examples of flexors including Connelly's flexor, Steffen's flexor, and Shtogrin's flexors of all genera contains at least one edge such that the dihedral angle at this edge is constant during the flexion. On the other hand there are self-intersecting flexible polyhedra, for example, Bricard's octahedra with all dihedral angles changing non-trivially during the flexion. The connection between embeddability and the existence of a constant dihedral angle is mysterious, and there are no reasonable arguments in favor of it. The following natural problem was posed by Sabitov, and still remains unsolved.

\begin{problem}
Does there exist a flexor~$P_t$, $t\in (\alpha,\beta)$, in~$\R^3$ such that all dihedral angles of~$P_t$ change non-trivially as $t$ runs over any subinterval of~$(\alpha,\beta)$?
\end{problem}

\section{Configuration spaces of flexible polyhedra}

Let us start with a rigorous definition of a simplicial polyhedron in~$\R^n$. A finite simplicial complex~$K$ is called a \textit{$k$-dimensional pseudo-manifold} if  every simplex of~$K$ is contained in a $k$-dimensional simplex,
 every $(k-1)$-dimensional simplex of~$K$ is contained in exactly two $k$-dimensional simplices,
and $K$ is \textit{strongly connected}. The latter means that any two $k$-dimensional simplices of~$K$ can be connected by a sequence of $k$-dimensional simplices such that every two consecutive simplices  have a common $(k-1)$-dimensional face. 
A pseudo-manifold~$K$ is  \textit{oriented} if its simplices of maximal dimension are endowed with compatible orientations. 

\begin{definition}\label{def_polyhedron}
Let $K$ be an oriented $(n-1)$-dimensional pseudo-manifold. A \textit{simplicial polyhedron} in~$\R^n$ of combinatorial type~$K$ is a mapping $P\colon K\to\R^n$ whose restriction to every simplex of~$K$ is affine linear. A polyhedron~$P$ is called \textit{non-degenerate} if the restriction of~$P$ to every simplex of~$K$ is an embedding,
and there is no decomposition of~$K$ into the union of  subcomplexes~$K_1,\ldots,K_s$ such that $\dim K_i=n-1$ for all~$i$, and $P(K_i\cap K_j)$ is contained  in an $(n-2)$-dimensional plane for any $i\ne j$.
 A \textit{flexible simplicial polyhedron} of combinatorial type~$K$ is a continuous family of polyhedra $P_t\colon K\to\R^n$, where $t$ runs over some interval, such that all edge lengths of~$P_t$ are constant as $t$ varies but the deformation~$P_t$ is not induced by an isometry of~$\R^n$. A flexible polyhedron is said to be \textit{non-degenerate} if $P_t$ is a non-degenerate polyhedron for all but finitely many values of~$t$. 
\end{definition}

Let us fix an oriented $(n-1)$-dimensional pseudo-manifold~$K$, and a set~$\bell$ of positive numbers~$\ell_{uv}=\ell_{vu}$ indexed by edges~$[uv]$ of~$K$. (If two vertices~$u$ and~$v$ are not joined by an edge, we do not fix any number~$\ell_{uv}$.) We would like to consider the space of all polyhedra $P\colon K\to\R^n$ with the prescribed set of edge lengths~$\bell$ up to orientation preserving isometries of~$\R^n$.

We shall always assume that the fixed set of edge lengths~$\bell$ satisfy the following condition: For any simplex~$[v_1\ldots v_k]$ of~$K$ there exists a non-degenerate simplex in~$\R^n$ with the prescribed edge lengths~$\ell_{v_iv_j}$. If this condition were not satisfied, then there would not exist  non-degenerate polyhedra~$P$ with the prescribed set of edge lengths.
Let us fix an $(n-1)$-dimensional simplex $[w_1\ldots w_n]$ of~$K$ and a non-degenerate simplex~$[\ba_{1}\cdots\ba_{n}]$  in~$\R^n$ such that the length of the edge~$[\ba_{i}\ba_{j}]$ is equal to~$\ell_{w_iw_j}$ for any $i\ne j$.
For each polyhedron $P\colon K\to\R^n$ with the set of edge lengths~$\bell$, there is a unique orientation preserving isometry of~$\R^n$ that takes~$P$ to~$P'$ such that $P'(w_i)=\ba_{i}$, $i=1,\ldots,n$. Hence imposing conditions $P(w_i)=\ba_{i}$ is equivalent to taking the quotient of the space of all polyhedra~$P$ with the set of edge lengths~$\bell$ by the group of orientation preserving isometries of~$\R^n$. 

We denote by~$\Sigma=\Sigma_{\R^n}(K,\bell)$ the space of all polyhedra $P\colon K\to\R^n$ that have the prescribed edge lengths~$\bell$ and satisfy $P(w_i)=\ba_{i}$, $i=1,\ldots,n$. The space~$\Sigma$ will be called the \textit{configuration space} of polyhedra with the prescribed combinatorial type~$K$ and the prescribed set of edge lengths~$\bell$. Let us show that~$\Sigma$ is a (possibly reducible) real affine algebraic  variety. Suppose that $K$ has $m$ vertices and~$r$ edges. For each vertex~$v$, we put $P(v)=\bx_v=(x_{v,1},\ldots,x_{v,n})$. The points~$\bx_{w_i}=\ba_i$ are fixed. If $v$ does not coincide with any of $w_1,\ldots,w_n$, then the coordinates~$x_{v,s}$ of~$\bx_v$ can be considered as independent variables. Consider the real affine space~$\R^{(m-n)n}$ with the coordinates~$x_{v,s}$, where $v$ runs over all vertices of~$K$ different from $w_1,\ldots,w_n$, and $s=1,\ldots,n$. Obviously, a polyhedron $P\colon K\to\R^n$ is uniquely determined by $m$ points~$\bx_v$. Hence a polyhedron  $P\colon K\to\R^n$ satisfying  $P(w_i)=\ba_{i}$ can be identified with the corresponding point in~$\R^{(m-n)n}$, which will be also denoted by~$P$. Thus the configuration space~$\Sigma$ is the affine variety in~$\R^{(m-n)n}$ given by $r-n(n-1)/2$ quadratic equations
\begin{equation}\label{eq_system}
|\bx_u-\bx_v|^2=\ell_{uv}^2,
\end{equation}
where $[uv]$ runs over all edges of~$K$ except for the edges~$[w_iw_j]$. (Notice that there are no equations for the diagonals of the polyhedron.)
It is easy to see that different choices of simplices $[w_1\ldots w_n]$ and $[\ba_1\ldots\ba_n]$ lead to isomorphic affine varieties~$\Sigma$.

If $n=3$ and $K$ is a sphere, then Euler's formula yields that $r=3m-6$. Hence~\eqref{eq_system} is a system of $3m-9$ equations in $3m-9$ variables. It can be shown that for a generic~$\bell$ these equations are algebraically independent, and there is a non-empty open subset $\mathcal{U}\subset\R^{3m-9}$ such that the system of equations~\eqref{eq_system} is compatible for any~$\bell\in \mathcal{U}$.
If $K$ is an oriented surface of genus~$g>0$, then~\eqref{eq_system} is a system of $3m-9+6g$ equations in $3m-9$ variables. Hence this system of equations is overdetermined and is incompatible for a generic~$\bell$.
If $n>3$, then the system of equations~\eqref{eq_system} is overdetermined in all interesting cases.

If the system of equations~\eqref{eq_system} is compatible, it generally has finitely many isolated solutions. These isolated solutions are rigid polyhedra of combinatorial type~$K$. However, if for certain set of edge lengths~$\bell$ the affine variety~$\Sigma$ has an irreducible component~$\Xi$ of positive geometric dimension, then we obtain a flexible polyhedron.  Notice that~$\Sigma$ may contain irreducible  components consisting of degenerate polyhedra. Such irreducible  components will be called \textit{inessential} and will be neglected. 

For each irreducible component~$\Xi$ of~$\Sigma$,  consider its Zariski closure in~$\C^{(m-n)n}$. (Here we regard~$\Xi$  as a subset of~$\R^{(m-n)n}\subset\C^{(m-n)n}$ forgetting about the ideal of polynomial equations by which it was initially given.) This Zariski closure will be denoted by~$\Xi_{\C}$ and will be called the \textit{complexification} of~$\Xi$. It is easy to see that~$\Xi_{\C}$ is an irreducible complex affine variety and $\dim_{\C}\Xi_{\C}=\dim_{\R}\Xi$.

Most of known flexible polyhedra admit one-parametric flexions only. This corresponds to the case $\dim\Xi=1$. Then $\Xi_{\C}$ is a complex curve. Surprisingly, for all known examples of flexible polyhedra this curve is either rational or elliptic (cf. Section~\ref{section_high}). Hence, the following problem seems to be interesting.

\begin{problem}
Does there exist one-parametric flexible polyhedra for which the complexification of the configuration space is a complex curve of genus greater than~$1$? More precisely, does there exist a pair $(K,\ell)$ such that the affine variety~$\Sigma_{\R^n}(K,\bell)$ has an essential one-dimensional irreducible component~$\Xi$ whose complexification is a complex curve of genus greater than~$1$? 
\end{problem} 

Alongside with flexible polyhedra in the Euclidean space~$\R^n$, one can study flexible polyhedra in non-Euclidean spaces of constant curvature, that is, in the Lobachevsky space~$\Lambda^n$ and in the round sphere~$S^n$. In the spherical case, to avoid uninteresting examples involving antipodal points one should usually restrict himself to considering only polyhedra contained in the open hemisphere~$S^n_+$. We shall always realise the sphere~$S^n$ as the unit sphere in the Euclidean space~$\R^{n+1}$ with centre at the origin, and realise the Lobachevsky space~$\Lambda^n$ as the sheet of the hyperboloid $\langle\bx,\bx\rangle=1$, $x_0>0$ in the pseudo-Euclidean space $\R^{1,n}$ with coordinates $x_0,\ldots,x_n$ and the scalar product
\begin{equation}\label{eq_pseudo_scalar}
\langle\bx,\by\rangle=x_0y_0-x_1y_1-\cdots-x_ny_n.
\end{equation}

Definition~\ref{def_polyhedron} can be literally repeated in the non-Euclidean case with the only exception: We cannot use affine linear mappings to map an affine simplex to either~$S^n$ or~$\Lambda^n$. Instead, we should use pseudo-linear mappings. A mapping~$f$ of an affine simplex  $[v_0\ldots v_k]$ to either $S^n$ or~$\Lambda^n$ is called \textit{pseudo-linear} if 
$$
f(\beta_0v_0+\cdots+\beta_kv_k)=\frac{\beta_0v_0+\cdots+\beta_kv_k}{\sqrt{\langle\beta_0v_0+\cdots+\beta_kv_k,\beta_0v_0+\cdots+\beta_kv_k\rangle}}
$$
for all $\beta_0,\ldots,\beta_k\ge 0$ such that $\beta_0+\cdots+\beta_k=1$, where $\langle\cdot,\cdot\rangle$ is the Euclidean scalar product in~$\R^{n+1}$ in the case of~$S^n$ and the pseudo-Euclidean scalar product~\eqref{eq_pseudo_scalar} in~$\R^{1,n}$ in the case of~$\Lambda^n$.

Let $X^n$ be either $S^n$ or~$\Lambda^n$. 
We shall conveniently put $c(t)=\cos t$ when $X^n=S^n$, and $c(t)=\cosh t$ when $X^n=\Lambda^n$.
Recall that the distance between points in~$X^n$ satisfy
$$
c(\dist_{X^n}(\bx,\by))=\langle\bx,\by\rangle.
$$

A definition of the configuration space~$\Sigma=\Sigma_{X^n}(K,\bell)$ of polyhedra $P$ in~$X^n$ of the prescribed combinatorial type~$K$ and the prescribed set of edge lengths~$\bell$ is similar to the Euclidean case. Again, we choose a simplex $[w_1\ldots w_n]$ of~$K$ and fix the points $P(w_i)=\ba_i$. Then for each vertex $v$ of~$K$ different from $w_1,\ldots,w_n$, the point~$P(v)$ is a vector $\bx_v=(x_{v,0},\ldots,x_{v,n})$ in either~$\R^{n+1}$ or~$\R^{1,n}$. Consider the real affine space~$\R^{(m-n)(n+1)}$ with the  coordinates~$x_{v,s}$, where $v$ runs over all vertices of~$K$ different from $w_1,\ldots,w_n$, and $s=0,\ldots,n$.
Now, we have quadratic equations of two types. First, we should impose the condition that all points~$\bx_v$ belong to~$X^n$. Hence we obtain $m-n$ equations
\begin{equation}\label{eq_system1}
\langle\bx_v,\bx_v\rangle=1,
\end{equation}
where $v$ runs over all vertices of~$K$ different from $w_1,\ldots,w_n$. Second, we should require that the polyhedron has the prescribed edge lengths. Therefore we obtain $r-n(n-1)/2$ equations
\begin{equation}\label{eq_system2}
\langle\bx_u,\bx_v\rangle=c(\ell_{uv}),
\end{equation}
where $[uv]$ runs over all edges of~$K$ except for the edges~$[w_iw_j]$.
Thus the configuration space~$\Sigma\subset\R^{(m-n)(n+1)}$ is the real affine variety given by the $r+m-n(n+1)/2$ equations~\eqref{eq_system1} and~\eqref{eq_system2}.

\section{Examples of high-dimensional flexible polyhedra}\label{section_high}

Until recently even self-intersecting flexible polyhedra were known only in spaces of dimensions~$3$ and~$4$.  We have discussed flexible polyhedra in~$\R^3$ in  Section~\ref{section_three}. All these flexible polyhedra have analogs in~$S^3$ and~$\Lambda^3$. This was first noticed by Kuiper~\cite{Kui78}.
Examples of self-intersecting flexible polyhedra in~$\R^4$ were constructed by Walz (unpublished) and by Stachel~\cite{Sta00}. These flexible polyhedra have combinatorial type of four-dimensional cross-polytope.

The \textit{regular $n$-dimensional cross-polytope} is the regular polytope dual to the $n$-dimensional cube, i.\,e., the convex hull of $2n$ points $\pm \mathbf{e}_1,\ldots,\pm\mathbf{e}_n$, where $\mathbf{e}_1,\ldots,\mathbf{e}_n$ is the standard basis of~$\R^n$. (We always identify a vector with its endpoint.) Denote by~$K^{n-1}$ the boundary of this polytope. A polyhedron $P$  of combinatorial type~$K^{n-1}$ will be called  a \textit{cross-polytope}. A cross-polytope~$P$ is uniquely determined by its $2n$ vertices $\ba_i=P(\mathbf{e}_i)$ and $\bb_i=P(-\mathbf{e}_i)$, $i=1,\ldots,n$.

In~\cite{Gai13} the author constructed examples of flexible cross-polytopes in the Euclidean spaces~$\R^n$, the Lobachevsky spaces~$\Lambda^n$, and  the round spheres~$S^n$  of all dimensions, and obtained a complete classification of all flexible cross-polytopes. In dimensions $5$ and higher, they became the first examples of flexible polyhedra. Besides, for any flexible cross-polytope was written an explicit parametrization for its flexion in either rational or elliptic functions. Now, we discuss briefly some of these results. 
As in the previous section, we fix the vertices~$\ba_1,\ldots,\ba_n$, regard the coordinates of the vertices $\bb_1,\ldots,\bb_n$ as~$n^2$ independent variables, and consider the configuration space $\Sigma$ of all cross-polytopes $P$ of combinatorial type~$K^{n-1}$ with the prescribed set of edge lengths~$\bell$ and the prescribed vertices~$\ba_1,\ldots,\ba_n$.

\begin{theorem}[\cite{Gai13}]
Any non-degenerate flexible cross-polytope admits not more than a one-parametric flexion. In other words, any essential irreducible component~$\Xi$ of~$\Sigma$ is either a point or a curve. In the latter case, $\Xi_{\C}$ is either a rational  or an elliptic complex curve. For each of the spaces~$\R^n$, $S^n$, and~$\Lambda^n$ of every dimension~$n$, there exist non-degenerate flexible cross-polytopes with both rational and elliptic curves~$\Xi_{\C}$.
\end{theorem}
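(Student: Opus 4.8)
The plan is to follow the strategy behind Bricard's classification of flexible octahedra and to reduce everything to a system of biquadratic equations on a product of circles. Fix the vertices $\ba_1,\dots,\ba_n$. Since the pairs $\{\mathbf e_i,-\mathbf e_i\}$ are exactly the diagonals (non-edges) of $K^{n-1}$, the vertex $\bb_j=P(-\mathbf e_j)$ is constrained only by its prescribed distances to the $\ba_i$ and to the $\bb_i$ with $i\ne j$. For a generic non-degenerate choice of $\bell$ the $n-1$ points $\ba_i$ ($i\ne j$) are affinely independent, and the $n-1$ distance conditions to them confine $\bb_j$ to a circle $C_j$: in the Euclidean case the circle in the $2$-plane through the circumcentre of the $\ba_i$ ($i\ne j$) orthogonal to their affine hull, and in the cases $X^n=S^n,\Lambda^n$ the analogous curve in the corresponding totally geodesic $2$-plane, where by $c(\dist_{X^n}(\bx,\by))=\langle\bx,\by\rangle$ everything is again cut out by linear conditions on the ambient (pseudo-)Euclidean coordinates. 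Parametrising each $C_j$ by an angle $\theta_j$ and putting $z_j=\tan(\theta_j/2)$, the configuration space $\Sigma=\Sigma_{X^n}(K^{n-1},\bell)$, away from its inessential components, is identified with the subvariety of the torus $C_1\times\cdots\times C_n$ defined by the $\binom n2$ equations $E_{ij}$ which express $\dist_{X^n}(\bb_i,\bb_j)=m_{ij}$ for $i<j$, $m_{ij}$ being the prescribed length of $[(-\mathbf e_i)(-\mathbf e_j)]$. Expanding $c\bigl(\dist_{X^n}(\bb_i,\bb_j)\bigr)$ as a function of $\theta_i,\theta_j$ gives a constant plus a term linear in $(\cos\theta_i,\sin\theta_i)$, a term linear in $(\cos\theta_j,\sin\theta_j)$, and a bilinear term; hence each $E_{ij}$ is a polynomial of degree $\le 2$ in $z_i$ and $\le 2$ in $z_j$ and in no other variable.

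Next I would show that every flexion is one-parametric. The projective closure $\Gamma_{ij}\subset\mathbb P^1\times\mathbb P^1$ of $\{E_{ij}=0\}$ has bidegree $(2,2)$, so its arithmetic genus is at most $(2-1)(2-1)=1$; in particular each irreducible component of $\Gamma_{ij}$ is rational or elliptic, and the two projections to $C_i$ and to $C_j$ have degree at most $2$. Let $\Xi$ be an essential irreducible component of $\Sigma$ that is not a point, with complexification $\Xi_{\C}$. For every pair $i\ne j$ the functions $z_i,z_j\in\C(\Xi_{\C})$ satisfy the nonzero polynomial relation $E_{ij}=0$, hence are algebraically dependent on $\Xi_{\C}$; since $\Xi$ is essential and not a point, some coordinate — say $z_1$ — is non-constant, so every $z_j$ is algebraic over $\C(z_1)$, and therefore $\dim_{\C}\Xi_{\C}=1$. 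This proves the first two assertions. Moreover $(z_1,z_2)$ maps $\Xi_{\C}$ onto a rational or elliptic component $\Gamma$ of $\Gamma_{12}$, so $g(\Xi_{\C})\ge g(\Gamma)$, with equality when this map is birational; thus the whole problem reduces to showing that for a flexible cross-polytope $\C(\Xi_{\C})=\C(z_1,z_2)$.

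The heart of the proof, and the step I expect to be the main obstacle, is a compatibility analysis of the family $\{E_{ij}\}$. For each triple $i,j,k$ one considers the biquadratic correspondences $\Gamma_{ij},\Gamma_{jk},\Gamma_{ik}$: on $\Xi_{\C}$ the correspondence obtained by composing $\Gamma_{ij}$ with $\Gamma_{jk}$ must be compatible with $\Gamma_{ik}$. Writing $\bb_j$ through the centre $\mathbf o_j$, radius $r_j$ and an orthonormal frame $\mathbf u_j,\mathbf v_j$ of the $2$-plane of $C_j$ and carrying this condition through, one obtains explicit (pseudo-)linear-algebraic identities among the $(\mathbf o_j,r_j,\mathbf u_j,\mathbf v_j)$, hence metric relations among $\ba_1,\dots,\ba_n$ and $\bell$. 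The expected outcome is a short list of types of flexible cross-polytopes — the direct high-dimensional analogues of Bricard's line-symmetric, plane-symmetric and skew octahedra, together with their degenerations — and for each type one checks directly that either every $z_j$ is a rational function of $z_1$ on $\Xi_{\C}$ (so $\Xi_{\C}$ is rational), or every $z_j$ lies in one and the same quadratic extension of $\C(z_1)$, whence $\C(\Xi_{\C})=\C(z_1,z_2)=\C(\Gamma)$ and $\Xi_{\C}$ is birational to $\Gamma$; either way $g(\Xi_{\C})\le 1$. The delicate part is the bookkeeping: the rational-type flexible cross-polytopes occur precisely as the degenerations in which the generically elliptic curves $\Gamma_{ij}$ split into rational components, so all these degenerate configurations have to be enumerated and treated separately.

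Finally, for the existence statement I would exhibit, in each dimension $n$ and for each of $\R^n$, $S^n$, $\Lambda^n$, explicit representatives of both types. For the elliptic case I would take a ``skew'' placement of $\ba_1,\dots,\ba_n$ — the direct generalisation of Bricard's third type — arranged so that the correspondences $\Gamma_{1j}$ are smooth genus-$1$ curves sharing a common elliptic involution; then $z_1,\dots,z_n$ admit a simultaneous uniformisation and the flexion is parametrised by Jacobian elliptic functions $\sn,\cn,\dn$. For the rational case I would impose a line or plane symmetry on the cross-polytope (as in Bricard's first two types); the symmetry degenerates the $\Gamma_{1j}$ to rational curves and makes $\Xi_{\C}$ rational, with an explicit trigonometric flexion. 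In each case a direct computation shows $P_t$ is non-degenerate for all but finitely many $t$: the restrictions to simplices are embeddings for generic $t$, and the no-decomposition condition of Definition~\ref{def_polyhedron} holds because the constructed vertices do not lie in any union of $(n-2)$-planes of the forbidden kind. The spherical and hyperbolic examples come from the Euclidean ones by the substitution $c(t)=\cos t$, resp.\ $\cosh t$, together with the pseudo-Euclidean product~\eqref{eq_pseudo_scalar}.
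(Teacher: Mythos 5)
Your reduction is essentially the right one, and it coincides with the scheme of~\cite{Gai13} described in Section~\ref{section_high}: parametrising each $\bb_j$ on its circle by $z_j$ is, up to a fractional--linear change of variable, the same as using the tangents $t_j=\tan(\phi_j/2)$ of the half dihedral angles at the faces $[\ba_1\ldots\hat\ba_j\ldots\ba_n]$, your pairwise bidegree-$(2,2)$ relations are the Bricard-type equations~\eqref{eq_biquad2}, and the transcendence-degree argument for $\dim\Xi\le 1$ together with the remark that a $(2,2)$ curve has arithmetic genus at most~$1$ are correct first steps (modulo checking, via non-degeneracy, that each $E_{1j}$ genuinely involves $z_j$). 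The genuine gap is exactly at what you yourself call the heart of the proof. You concede that $g(\Xi_{\C})\ge g(\Gamma)$ points the wrong way and that everything hinges on showing $\C(\Xi_{\C})=\C(z_1,z_2)$, and you propose to get this from a Bricard-style compatibility analysis whose ``expected outcome'' is a short list of high-dimensional analogues of the line-symmetric, plane-symmetric and skew octahedra. That expectation is false: as stated in Section~\ref{section_high}, in dimensions $n>3$ flexible cross-polytopes typically have neither symmetries nor equal edges or angles, so Bricard's geometric dichotomy does not survive, and no such list exists. The actual argument of~\cite{Gai13} inverts the logic: one first classifies \emph{analytically} all one-parametric families $t_1(u),\ldots,t_n(u)$ solving an abstract system of the form~\eqref{eq_biquad2} with unspecified coefficients (the rational families $t_i=\lambda_iu$ and the elliptic families such as $t_i=\lambda_i\dn(u-\sigma_i)$), and only afterwards reconstructs the admissible geometry from the coefficients, e.g.\ through the Gram-matrix constraint~\eqref{eq_G}. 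Without this classification your proof of both the genus bound and the existence statement is missing its main ingredient.

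Moreover, your concrete plan for the existence part has the rational/elliptic correspondence backwards. In the paper the rational family~\eqref{eq_param1}, with all $t_i$ proportional, specialises for $n=3$ to Bricard's \emph{third} (skew, asymmetric) type, while the elliptic family~\eqref{eq_param2}, parametrised by $\dn$, specialises to Bricard's \emph{first} (line-symmetric) type. So constructing elliptic examples from ``a skew placement generalising Bricard's third type'' and rational examples by ``imposing a line or plane symmetry'' would, already at $n=3$, produce the opposite of what you claim: the skew family forces $t_i=\lambda_iu$ and hence a rational $\Xi_{\C}$, and the line-symmetric octahedra are the elliptic ones. In higher dimensions neither family is obtained by imposing a symmetry at all: \eqref{eq_param1} starts from an arbitrary fixed simplex $[\ba_1\ldots\ba_n]$ and arbitrary distinct $\lambda_i$, while \eqref{eq_param2} requires the simplex to satisfy~\eqref{eq_G} and is generically asymmetric. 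So, as written, the final paragraph of your proposal would not produce the required examples, and the middle paragraph would not produce the required classification.
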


\begin{problem}
Does there exists a set of edge lengths~$\bell$ of~$K^{n-1}$ such that the variety~$\Sigma=\Sigma_{X^n}(K^{n-1},\bell)$, where $X^n$ is~$\R^n$ or~$S^n$ or~$\Lambda^n$, contains two different essential one-dimensional irreducible components?
\end{problem}

Here we shall not describe a complete classification of flexible cross-polytopes obtained in~\cite{Gai13} but we shall explain some ideas behind the construction of high-dimensional flexible cross-polytopes and we shall write explicitly  parametrizations for two examples of flexible cross-polytopes. For simplicity, we shall restrict ourselves to the Euclidean case.

Let us introduce some notation. Denote by $a_1,\ldots,a_n$ the lengths of the altitudes of the simplex $[\ba_1\ldots\ba_n]$ drawn from the vertices $\ba_1,\ldots,\ba_n$, respectively. Denote by $\bn_1,\ldots,\bn_n$ the interior unit normal vectors to the facets of the simplex $[\ba_1\ldots\ba_n]$ opposite to the vertices $\ba_1,\ldots,\ba_n$, respectively. Put $g_{ij}=\langle\bn_i,\bn_j\rangle$, in particular, $g_{ii}=1$. Choose one of the two unit normal vectors to the hyperplane in~$\R^n$ spanned by the simplex $[\ba_1\ldots\ba_n]$, and denote this vector by~$\bm$.

In his original paper~\cite{Bri97} Bricard deduced the following equation describing the  flexions of a tetrahedral angle. Let $\bs\ba\bb\bc\bd$ be a tetrahedral angle with vertex~$\bs$ and consecutive edges $\bs\ba$, $\bs\bb$, $\bs\bc$, and~$\bs\bd$. Assume that the flat angles $\ba\bs\bb$, $\bb\bs\bc$, $\bc\bs\bd$, and~$\bd\bs\ba$  are rigid plates, and there are hinges at the edges~$\bs\ba$, $\bs\bb$, $\bs\bc$, and~$\bs\bd$. Denote by~$\phi$ and~$\psi$ the dihedral angles of the tetrahedral angle~$\bs\ba\bb\bc\bd$ at the edges~$\bs\ba$ and~$\bs\bb$, respectively. Then the values $t=\tan(\phi/2)$ and~$t'=\tan(\psi/2)$ satisfy a biquadratic relation of the form
\begin{equation}\label{eq_biquad}
At^2{t'}^2+Bt^2+2Ctt'+D{t'}^2+E=0,
\end{equation}
where the coefficients $A$, $B$, $C$, $D$, and~$E$ can be written explicitly from the values of the flat angles $\ba\bs\bb$, $\bb\bs\bc$, $\bc\bs\bd$, and~$\bd\bs\ba$. Then Bricard wrote the three equations of form~\eqref{eq_biquad} for the tetrahedral angles at the vertices~$\ba_1$, $\ba_2$, and~$\ba_3$ of an octahedron $P\colon K^2\to\R^3$. Thus he obtained three biquadratic equations in the three variables $t_i=\tan(\phi_i/2)$, $i=1,2,3$, where $\phi_1$, $\phi_2$, and~$\phi_3$ are the dihedral angles of the octahedron at the edges~$[\ba_2\ba_3]$, $[\ba_3\ba_1]$, and~$[\ba_1\ba_2]$, respectively. An octahedron~$P$ is flexible if and only if the obtained system of equations has a one-parametric family of solutions. Further, Bricard used this fact to prove that any flexible octahedron has certain special geometric properties. Namely, either certain edges or certain flat angles of the octahedron should be pairwise equal to each other. Finally, these geometric properties  were used to obtain a complete classification of flexible octahedra. 

In higher dimensions this geometric approach does not work, since flexible cross-polytopes typically have neither symmetries nor equal edges or angles.  However, the system of equations of the form~\eqref{eq_biquad} also can be written and plays the key role in our construction. The difference of our approach from Bricard's approach is that instead of trying to deduce geometric consequences of these equations, we study the compatibility conditions for this system of equations from algebraic viewpoint.

First, let us show how to obtain a system of equations of the form~\eqref{eq_biquad} in an arbitrary dimension. For an $n$-dimensional cross-polytope $P$, we denote by~$\phi_i$ the dihedral angle of it at the $(n-2)$-dimensional face $F_i=[\ba_1\ldots\hat\ba_i\ldots\ba_n]$, where hat denotes the omission of the vertex. We put $t_i=\tan(\phi_i/2)$. Consider the $(n-3)$-dimensional face $F_{ij}=[\ba_1\ldots\hat\ba_i\ldots\hat\ba_j\ldots\ba_n]$, and intersect it by a three-dimensional plane~$L$ orthogonal to it. Then the intersections of~$L$ with the four $(n-1)$-dimensional faces of~$P$ containing~$F_{ij}$ are flat angles that form a tetrahedral angle. The dihedral angles of this tetrahedral angle at the two consecutive edges~$F_i\cap L$ and~$F_j\cap L$  are equal to~$\phi_i$ and~$\phi_j$, respectively. Thus we obtain an equation 
\begin{equation}\label{eq_biquad2}
A_{ij}t_i^2t_j^2+B_{ij}t_i^2+2C_{ij}t_it_j+D_{ij}t_j^2+E_{ij}=0,
\end{equation}
where the coefficients $A_{ij}$, $B_{ij}$, $C_{ij}$, $D_{ij}$, and~$E_{ij}$ can be written explicitly from the set of edge lengths~$\bell$. The obtained system of $n(n-1)/2$ equations in $n$ variables is overdetermined when~$n>3$. 
The problem of classifying flexible cross-polytopes comes to the problem of finding of all~$\bell$ such that the system of equations~\eqref{eq_biquad2} has a one-parametric family of solutions, which seems to be rather hard. Nevertheless, this problem can be solved in the following way. First, we find all systems of functions $t_1(u),\ldots,t_n(u)$ that satisfy the system of non-trivial biquadratic equations of the form~\eqref{eq_biquad2} with \textit{some} coefficients $A_{ij}$, $B_{ij}$, $C_{ij}$, $D_{ij}$, and~$E_{ij}$ not necessarily corresponding to any set of edge lengths~$\bell$. Second, we solve the problem of reconstructing of the geometry of~$P$ (equivalently, of the set of edge lengths~$\bell$) from the coefficients $A_{ij}$, $B_{ij}$, $C_{ij}$, $D_{ij}$, and~$E_{ij}$. This program was realized in~\cite{Gai13}. Let us illustrate it with two instructive examples.

First, consider the functions $t_i(u)=\lambda_iu$, $i=1,\ldots,n$, where $\lambda_i$ are nonzero real numbers such that $\lambda_i\ne \pm\lambda_j$ whenever $i\ne j$. These functions satisfy infinitely many systems of equations of the form~\eqref{eq_biquad2}. Indeed, we obtain that, for any~$i\ne j$, $A_{ij}=E_{ij}=0$, and the three coefficients $B_{ij}$, $C_{ij}$, and~$D_{ij}$ satisfy the equation $B_{ij}\lambda_i^2+2C_{ij}\lambda_i\lambda_j+D_{ij}\lambda_j^2=0$, which has infinitely many non-trivial solutions. These solutions lead to the following family of non-degenerate flexible cross-polytopes, see~\cite[Sect.~5]{Gai13}. Choose any non-degenerate $(n-1)$-dimensional simplex $[\ba_1\ldots\ba_n]$, which will remain fixed during the flexion, and choose any nonzero real numbers $\lambda_1,\ldots,\lambda_n$ such that $\lambda_i\ne \pm\lambda_j$ whenever $i\ne j$. Then the motion of the vertices $\bb_i$ during the flexion  is parametrized by
\begin{equation}\label{eq_param1}
\begin{split}
\bb_i(u)&=\left(\frac{1}{a_i}+2\lambda_i\sum_{j\ne i}\frac{\lambda_ig_{ij}-\lambda_j}{a_j(\lambda_i^2-\lambda_j^2)}\right)^{-1}\\
&\times \left(\frac{\ba_i}{a_i}+2\lambda_i\sum_{j\ne i}\frac{(\lambda_ig_{ij}-\lambda_j)\ba_j}{a_j(\lambda_i^2-\lambda_j^2)} 
+\frac{2\lambda_iu(\bm-\lambda_iu\,\bn_i)}{\lambda_i^2u^2+1}\right)
\end{split}
\end{equation}

Notice that, once this formula is written, the constancy of the edge lengths of the cross-polytope can be checked by an easy immediate computation. It follows directly from the construction that the tangents of the halves of dihedral angles of this cross-polytope are proportional to each other during the flexion\footnote{We were not precise enough in our consideration of dihedral angles. In fact, in some cases we should take the interior dihedral angles, and in other cases we should take the exterior dihedral angles, which are obtained from the interior dihedral angles by subtracting them from~$\pi$. Hence the correct statement is as follows:  The tangents of the halves of dihedral angles of the cross-polytope  are either directly or inversely proportional to each other during the flexion.}. If $n=3$, then the obtained family of flexible cross-polytopes turns into Bricard's flexible octahedra of the third type. For them, the fact that the tangents of the halves of dihedral angles are either directly or inversely proportional to each other during the flexion was known to Bricard~\cite{Bri97}. Surprisingly, the simplest from the algebraic viewpoint family of flexible cross-polytope turns into the most complicated from the geometric viewpoint type of flexible octahedra.

More complicated one-parametric families of solutions of the systems of equations~\eqref{eq_biquad2} can be written in Jacobi's elliptic functions. The first who noticed that flexions of spherical quadrilaterals, which are equivalent to flexions of tetrahedral angles, can be parametrized in elliptic functions was Darboux~\cite{Dar79}. Later Connelly~\cite{Con74} used the Weierstrass $\wp$-function to parametrize flexible polyhedra in three-space. However, their methods for introducing the elliptic parametrization were not based on equations of the form~\eqref{eq_biquad}, hence, were not appropriate for generalising to higher dimensions. Equation~\eqref{eq_biquad} is closely related to addition laws for Jacobi's elliptic functions. Indeed, if we fix any elliptic modulus~$k$, $0<k<1$, and take, say, $t(u)=\dn u$, $t'(u)=\dn(u-\sigma)$, then these functions will satisfy equation~\eqref{eq_biquad} with coefficients $A=\sn^2\sigma$, $B=D=\cn^2\sigma$, $C=\dn\sigma$, and $E=(1-k^2)\sn^2\sigma$. The first who noticed that this fact can be used in theory of flexible polyhedra was Izmestiev~\cite{Izm14},~\cite{Izm15}\footnote{Though papers~\cite{Izm14},~\cite{Izm15} were put on the arXiv later than~\cite{Gai13}, certain preliminary versions of them circulated as preprints before the paper~\cite{Gai13} was written, and the author borrowed from them the idea of using the elliptic parametrization for the solutions of equations of the form~\eqref{eq_biquad}.}. He used  elliptic solutions of equations of the form~\eqref{eq_biquad} to study the flexions of the so-called Kokotsakis polyhedra with quadrangular base, where a Kokotsakis polyhedron is  a polyhedral surface with boundary in three-space that is combinatorially equivalent to a neighborhood of a quadrilateral in a quad surface. 

Returning to the system of equations~\eqref{eq_biquad2}, we can write many different solutions of it in elliptic functions; all of them are classified in~\cite[Sects.~6,\,7]{Gai13}. Here we present only one example. Choose an elliptic modulus~$k\in(0,1)$, real phases~$\sigma_1,\ldots,\sigma_n$ that are pairwise different modulo $K\Z$, where $K$ is the real quarter-period corresponding to the modulus~$k$, and nonzero coefficients~$\lambda_1,\ldots,\lambda_n$. Put $t_i(u)=\lambda_i\dn(u-\sigma_i)$, $i=1,\ldots,n$. These functions satisfy a system of equations of the form~\eqref{eq_biquad2} but, unlike the previous case, the coefficients of these equations are determined uniquely up to proportionality. This implies that we cannot choose the simplex $[\ba_1\ldots\ba_n]$ arbitrarily. Instead, we should choose  this simplex so that the elements of the Gram matrix of $\bn_1,\ldots,\bn_n$ are given by $g_{ii}=1$ and 
\begin{equation}\label{eq_G}
g_{ij}=\frac{(\lambda_i^2+\lambda_j^2)\cn^2(\sigma_i-\sigma_j)-(1+(1-k^2)\lambda_i^2\lambda_j^2)\sn^2(\sigma_i-\sigma_j)}
{2\lambda_i\lambda_j\dn(\sigma_i-\sigma_j)}\,,\qquad i\ne j.
\end{equation}
Here we face the following difficulty. Not any symmetric real matrix with units on the diagonal can be realised as the Gram matrix of unit vectors orthogonal to the facets of a simplex.  This matrix must be degenerate positive semidefinite, and must have nonzero proper principal minors. However, it can be shown that the parameters~$k$, $\sigma_1,\ldots,\sigma_n$, $\lambda_1,\ldots,\lambda_n$ can be chosen so that the matrix $G=(g_{ij})$ given by~\eqref{eq_G} will satisfy these conditions. We again denote by~$a_i$ the lengths of the altitudes of the simplex $[\ba_1\ldots\ba_n]$. Certainly, now they can be written explicitly (up to proportionality) from the Gram matrix elements~$g_{ij}$, hence, from~$k$, $\sigma_1,\ldots,\sigma_n$, $\lambda_1,\ldots,\lambda_n$  but the resulting expressions will be too cumbersome. The parametrization of the flexible cross-polytope is now given by
\begin{equation}\label{eq_param2}
\begin{split}
\bb_i(u)&=\left(\frac{1}{a_i}+\lambda_i\sum_{j\ne i}\frac{\cn^2(\sigma_i-\sigma_j)-(1-k^2)\lambda_j^2\sn^2(\sigma_i-\sigma_j)}{a_j\lambda_j\dn(\sigma_i-\sigma_j)}\right)^{-1}\\
{}&\times \left(\frac{\ba_i}{a_i}+\lambda_i\sum_{j\ne i}\frac{(\cn^2(\sigma_i-\sigma_j)-(1-k^2)\lambda_j^2\sn^2(\sigma_i-\sigma_j))\ba_j}{a_j\lambda_j\dn(\sigma_i-\sigma_j)}\right.\\
&\hspace{11.4mm}{}+\left.\frac{2\lambda_i\dn (u-\sigma_i)\bm-2\lambda_i^2\dn^2 (u-\sigma_i)\,\bn_i}{\lambda_i^2\dn^2(u-\sigma_i)+1}
\vphantom{\frac{\ba_i}{a_i}+\lambda_i\sum_{j\ne i}\frac{(\cn^2(\sigma_i-\sigma_j)-(1-k^2)\lambda_j^2\sn^2(\sigma_i-\sigma_j))\ba_j}{a_j\lambda_j\dn(\sigma_i-\sigma_j)}}
\right)
\end{split}
\end{equation}
For $n=3$, this flexible cross-polytope is Bricard's octahedron of the first type.

Though a complete classification of all flexible cross-polytopes in~$\R^n$, $S^n$, and~$\Lambda^n$ was obtained in~\cite{Gai13}, it is very hard to find out from a parametrization like~\eqref{eq_param1} or~\eqref{eq_param2} whether the flexible cross-polytope given by it is embedded or self-intersecting. However, the following conjecture seems to be plausible.

\begin{conjecture}
All flexible cross-polytopes in~$\R^n$ and in~$\Lambda^n$, where $n\ge 3$, are self-intersecting.
\end{conjecture}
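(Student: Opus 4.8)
The natural line of attack is through the complete classification of flexible cross-polytopes obtained in~\cite{Gai13}. By that classification every flexible cross-polytope in~$\R^n$ or~$\Lambda^n$ lies in one of finitely many explicitly parametrized families — the rational family behind~\eqref{eq_param1}, the elliptic families behind~\eqref{eq_param2}, and their degenerate relatives classified in~\cite[Sects.~5--7]{Gai13} — so it suffices to exhibit, for each family and each value of the parameter, a pair of facets of the corresponding cross-polytope whose images intersect. For~$n=3$ the statement is classical, going back to Bricard~\cite{Bri97} in the Euclidean case; the substance is $n\ge 4$. It should be noted that the statement deliberately excludes~$S^n$, and this is not an oversight: the most tempting inductive device — restricting a flexible cross-polytope to the star of a vertex~$\ba_1$ and projecting radially from~$\ba_1$, which yields a flexible cross-polytope of combinatorial type~$K^{n-2}$ in the unit sphere of directions at that vertex — reduces embeddedness of the given polyhedron to embeddedness of a \emph{spherical} flexible cross-polytope, about which the conjecture says nothing. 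Hence any proof must use the Euclidean or hyperbolic geometry essentially and cannot be purely combinatorial or inductive in the dimension.

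First I would try to distil from the parametrizations a cheap necessary condition for embeddedness. One may keep the facet $F=[\ba_1\ldots\ba_n]$ fixed throughout the flexion; let $H$ be the hyperplane it spans and~$\bm$ a unit normal to~$H$. Since the normals $\bn_1,\ldots,\bn_n$ are orthogonal to~$\bm$, the displayed formulas give the signed distance of~$\bb_i$ from~$H$ in closed form: for~\eqref{eq_param1} it equals $2C_i\lambda_iu/(\lambda_i^2u^2+1)$ with a nonzero constant~$C_i$, and for~\eqref{eq_param2} the analogous expression with~$\dn(u-\sigma_i)$ in place of~$\lambda_iu$. Next, analyse the local picture near a codimension-three face $T=[\ba_1\ldots\hat\ba_i\ldots\hat\ba_j\ldots\ba_n]\subset H$. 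The four facets of the cross-polytope incident to~$T$ form, in the three-dimensional space transverse to~$T$, a cone over a geodesic quadrilateral in~$S^2$ two of whose vertices lie on the equator cut out by~$H$ (the directions toward~$\ba_i$ and~$\ba_j$) and two of whose vertices lie in the upper or lower open hemisphere according to the signs of the distances of~$\bb_i$ and~$\bb_j$ from~$H$; embeddedness forces this quadrilateral to be simple for all~$i\ne j$ and all~$u$. The plan is: for each family in~\cite{Gai13}, read off the signed distances from the explicit formulas and show that the $n(n-1)/2$ simplicity conditions cannot all hold on the whole flexion interval, producing a crossing of two facets sharing such a~$T$.

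The step I expect to be the genuine obstacle — and, one suspects, the reason this is only a conjecture — is that no such clean obstruction is visible in general position. A flexible cross-polytope need not be convex even when embedded, so the facet~$F$ may fold back and a ``wrong side'' vertex~$\bb_i$ need not by itself create a crossing: one must control the entire geodesic quadrilateral, and for generic values of the parameters $\lambda_i$, $\sigma_i$, $k$ the hemisphere patterns can be kept uniform. A proof therefore seems to require either (i) a case-by-case examination of the finitely many families of~\cite{Gai13}, supplementing the above criterion with ad hoc arguments that locate two crossing \emph{non-adjacent} facets in the borderline sub-families, or (ii) a new global invariant separating embedded from non-embedded polyhedra that behaves monotonically, or stays constant, along flexions in~$\R^n$ and~$\Lambda^n$ but not in~$S^n$ — an embeddedness counterpart of the role the Sabitov polynomial plays for volume. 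I would attempt~(i) first, treating~\eqref{eq_param1} as the model case and expecting the most delicate work to lie in the degenerate elliptic sub-families.
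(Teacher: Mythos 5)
This statement is a \emph{conjecture} in the paper, not a theorem: the paper offers no proof of it, only the remark that it ``seems plausible'' together with the contrasting result of~\cite{Gai15a} that the analogous claim fails in the open hemispheres~$S^n_+$. Your text is likewise not a proof but a research program, and you say so yourself: the decisive step --- showing that the $n(n-1)/2$ local simplicity conditions at the codimension-three faces cannot all hold along the whole flexion, for every family of~\cite{Gai13} and every admissible choice of the parameters $\lambda_i$, $\sigma_i$, $k$ --- is exactly the point you identify as ``the genuine obstacle'' and leave open. Since violating such a condition is only a \emph{sufficient} criterion for self-intersection (an embedded but non-convex cross-polytope could satisfy all of them, and distant facets could still cross without any shared codimension-three face being implicated), even a complete analysis of these local quadrilaterals might terminate inconclusively; nothing in the proposal rules that out. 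So the gap is not a technical detail but the entire content of the conjecture.

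On the positive side, the framing is sound and consistent with what the paper does establish: reduction to the classification of~\cite{Gai13} with the explicit parametrizations~\eqref{eq_param1} and~\eqref{eq_param2} is the natural starting point; the $n=3$ Euclidean case is indeed classical (Bricard~\cite{Bri97} proved all flexible octahedra are self-intersecting); and your observation that the spherical counterexamples of~\cite{Gai15a} forbid any argument that survives the passage to the vertex link (hence any purely combinatorial or dimension-inductive scheme) is a genuinely useful constraint on possible proofs. The paper itself notes that deciding embeddedness directly from parametrizations like~\eqref{eq_param1} or~\eqref{eq_param2} is very hard, which is precisely where your plan stalls. As it stands, your option~(i) is an unexecuted case analysis and your option~(ii) is a wish for an invariant that no one has constructed; neither closes the problem.
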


In the spheres and even in open hemispheres a similar assertion is false.

\begin{theorem}[\cite{Gai15a}]
For each $n\ge 3$, there exist embedded flexible cross-polytopes in the open hemisphere~$S^n_+$.
\end{theorem}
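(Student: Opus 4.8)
The plan is to construct, for every $n\ge 3$, one explicit flexible spherical cross-polytope and then to single out a value of the flexion parameter at which it is embedded and contained in $S^n_+$. This reduction is justified because being an embedded cross-polytope contained in the open hemisphere is an open condition on the configuration space $\Sigma_{S^n}(K^{n-1},\bell)$: it amounts to non-degeneracy of the $2^n$ facet simplices, disjointness of the interiors of the $2^n$ facets, and containment of the image in $S^n_+$, each of which is an open requirement. Hence if we find a flexible family $P_t$ lying on an essential one-dimensional irreducible component of $\Sigma$, together with one parameter value $t_0$ at which $P_{t_0}$ is embedded in $S^n_+$, then $P_t$ is an embedded flexible cross-polytope in $S^n_+$ for all $t$ near $t_0$, and restricting the flexion to that subinterval finishes the proof. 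It is worth noting in advance that such an embedded polyhedron is necessarily \emph{non-convex}: a spherically convex polytope contained in a hemisphere is rigid by the spherical analogue of Cauchy's theorem, hence isolated in its configuration space, so it cannot lie on a positive-dimensional component. In particular one cannot verify embeddedness simply by arranging convexity.

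To verify embeddedness I would pass to the central (gnomonic) projection $\gamma\colon S^n_+\to\R^n$ onto the tangent hyperplane at the centre of the hemisphere. It is a diffeomorphism taking every totally geodesic subsphere of $S^n_+$ onto an affine subspace; consequently it takes the spherical simplex spanned by any set of vertices of $P$ onto the Euclidean simplex spanned by their $\gamma$-images, so $\gamma$ identifies the image of $P$ with the image of the affine cross-polytope in $\R^n$ determined by the $2n$ points $\gamma(P(v))$, respecting the facet structure. Therefore $P$ is an embedded cross-polytope in $S^n_+$ if and only if its image lies in $S^n_+$ and this affine cross-polytope is embedded in $\R^n$. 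The latter is a purely combinatorial--geometric condition on a configuration of $2n$ points, stable under small perturbations and plainly satisfiable --- by the vertices of a convex cross-polytope, or, since we need non-convexity, by such a configuration with a single vertex pushed slightly inward. So it suffices to exhibit one flexible spherical cross-polytope whose image lies in $S^n_+$ and whose $2n$ projected vertices form a small perturbation of a fixed embedded non-convex affine cross-polytope.

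For the construction I would use the family of flexible cross-polytopes from~\cite{Gai13} allowing the greatest freedom in the underlying data, namely the spherical counterpart of the family described in~\cite[Sect.~5]{Gai13} (whose Euclidean version is~\eqref{eq_param1}), in which the simplex $[\ba_1\ldots\ba_n]$ and the nonzero coefficients $\lambda_1,\ldots,\lambda_n$, subject only to $\lambda_i\ne\pm\lambda_j$, may be prescribed freely. The steps would be: choose $[\ba_1\ldots\ba_n]$ to be a spherical simplex of moderate size sitting well inside $S^n_+$; choose a sign pattern and magnitudes for the $\lambda_i$, together with the value of the flexion parameter, so that the dihedral angles stay bounded away from $0$ and $\pi$; and then, expanding $\gamma(\bb_i)$ from the spherical analogue of~\eqref{eq_param1} in the chosen parameters, check that the $2n$ projected vertices all lie within a prescribed small distance of the vertices of a fixed embedded non-convex affine cross-polytope, while every facet simplex of $P$ stays in $S^n_+$. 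If the estimates become unwieldy, I would restrict to a subfamily admitting a reflection symmetry through a great hypersphere (a higher-dimensional analogue of Bricard's octahedra of the first or second type); this symmetry is inherited by $\gamma\circ P$ and essentially halves the work of checking that the facets are pairwise disjoint.

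The step I expect to be the main obstacle is exactly this last verification: as stressed just before the statement, a closed-form parametrization such as~\eqref{eq_param1} or~\eqref{eq_param2} carries almost no visible information about self-intersection. The delicate point is that the perturbation governed by the $\lambda_i$ must move the $2^n$ facets into pairwise disjoint position rather than into overlapping position --- in the Euclidean limit the corresponding configurations are (conjecturally always) self-intersecting, so the construction has to genuinely exploit the curvature of $S^n$: a geodesically ``large'' configuration inside the hemisphere projects gnomonically to a Euclidean configuration very unlike any small one, and it is this room that must be used to fit an embedded non-convex cross-polytope together with the flexibility relations~\eqref{eq_biquad2}. Isolating a concrete choice of $[\ba_1\ldots\ba_n]$ and of the $\lambda_i$ for which the embeddedness can actually be verified is where the real content of the proof lies.
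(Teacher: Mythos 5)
Your preparatory reductions are fine but they are not where the theorem lives. The openness of ``embedded and contained in $S^n_+$'' among non-degenerate polyhedra, the gnomonic-projection criterion for embeddedness, and the observation that any such example must be non-convex (spherical Cauchy rigidity) are all correct and easy; together they reduce the statement to exhibiting, for each $n$, one set of admissible data for which a flexible cross-polytope from the classification of~\cite{Gai13} is embedded and lies in the open hemisphere. That exhibition is precisely what you do not do. Your plan says ``choose the simplex, choose the $\lambda_i$, expand the parametrization and check that the projected vertices approximate a fixed embedded non-convex affine cross-polytope,'' and then you concede that carrying out this check ``is where the real content of the proof lies.'' A proof that defers exactly the decisive verification is a restatement of the problem, not a solution: the survey itself stresses, immediately before the theorem, that one cannot read off embeddedness from parametrizations such as~\eqref{eq_param1} or~\eqref{eq_param2}, and the cited proof in~\cite{Gai15a} consists precisely of a concrete construction together with a genuine verification of embeddedness (and, there, of non-constancy of the volume).

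There is also a structural flaw in the heuristic you lean on. The freedom to prescribe the simplex $[\ba_1\ldots\ba_n]$ and the $\lambda_i$ does not give you freedom to prescribe, even approximately, where the vertices $\bb_i$ land: they are determined by that data through the parametrization, subject to the compatibility constraints behind~\eqref{eq_biquad2}, so ``perturb a fixed embedded non-convex configuration'' is not an available move. The Euclidean case shows the degrees-of-freedom count proves nothing: the family~\eqref{eq_param1} admits an arbitrary simplex and arbitrary admissible $\lambda_i$, yet for $n=3$ it consists of Bricard octahedra of the third type, all self-intersecting, and the paper conjectures the same for all flexible cross-polytopes in $\R^n$ and $\Lambda^n$. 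So any correct argument must isolate what specifically about spherical geometry makes embedded members possible and exploit it quantitatively; you name this issue but leave it unresolved (and you do not even verify that the spherical counterpart of the family you chose allows the simplex to be prescribed freely, since the Gram-matrix constraints in the spherical classification differ from the Euclidean ones). As it stands, the proposal is a plausible research plan with the theorem's actual content missing.
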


\begin{problem}
Do there exist embedded flexible polyhedra in~$\R^n$ or~$\Lambda^n$ for $n\ge 4$?
\end{problem}

\section{The bellows conjecture}\label{section_BC}

Soon after the first examples of flexors had been found~\cite{Con77},~\cite{Ste78}, it was discovered that their volumes remain constant during the flexion, and the following conjecture was proposed, see~\cite{Kui78},~\cite{Con78}.

\begin{conjecture}[The bellows conjecture]
The volume of any flexor in~$\R^3$ is constant during the flexion.
\end{conjecture}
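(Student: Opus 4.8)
The plan is to follow the method of Sabitov, later streamlined by Connelly, Sabitov and Walz, which deduces the constancy of the volume from a purely algebraic fact about the \emph{Sabitov polynomial} attached to a combinatorial type. We may assume the flexor is simplicial: any flexible polyhedron can be triangulated without introducing new vertices, and, since its faces remain congruent to themselves during the flexion, this changes neither the edge lengths, nor the flexibility, nor the enclosed volume. So fix an oriented triangulated two-sphere $K$. For a possibly self-intersecting simplicial polyhedron $P\colon K\to\R^3$ one first needs a sensible notion of enclosed volume; one sets
\[
V(P)=\frac16\sum_{[uvw]}\det\bigl(P(u)-P(v_0),\,P(v)-P(v_0),\,P(w)-P(v_0)\bigr),
\]
the sum being over the oriented triangles of $K$ and $v_0$ a fixed vertex of $K$. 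This expression does not depend on the choice of $v_0$, is invariant under orientation-preserving isometries of $\R^3$ and changes sign under reflections, is a polynomial in the vertex coordinates, and agrees with the ordinary volume when $P$ is embedded. The goal is then to establish \emph{Sabitov's theorem}: for every such $K$ there are an integer $N\ge 1$ and polynomials $a_1,\dots,a_N$ with rational coefficients in variables $z_e$ indexed by the edges $e$ of $K$, depending only on $K$, such that
\[
V(P)^{2N}+a_1(\{\ell_e^2\})\,V(P)^{2N-2}+\dots+a_N(\{\ell_e^2\})=0
\]
for every polyhedron $P$ of combinatorial type $K$, where $\ell_e$ denotes the length of the edge $e$ of $P$. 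Granting this, the bellows conjecture is immediate: along a flexion $P_t$ all squared edge lengths are constant, hence $V(P_t)$ is a root of one fixed nonzero polynomial in a single variable and therefore takes only finitely many values; being continuous in $t$, it is constant.

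I would prove Sabitov's theorem in two steps. \textbf{Step 1.} Each $3\times 3$ determinant in the formula above is, by polarization, the square root of a polynomial in the squared distances $q_{uv}=|P(u)-P(v)|^2$ among the vertices of a single face together with $v_0$; hence $6\,V(P)$ is a sum of square roots of polynomials in the $q_{uv}$ over all pairs of vertices, and such a sum is integral over the ring $S=\Q[q_{uv}]$. \textbf{Step 2.} Show that for every pair $\{u,v\}$ that is \emph{not} an edge of $K$ --- that is, for every diagonal --- the squared length $q_{uv}$ is integral over the subring $R=\Q[q_e : e\in E(K)]$ generated by the squared \emph{edge} lengths alone; for a triangulated sphere with $m$ vertices one has $|E(K)|=3m-6$, which equals the dimension of the space of polyhedra modulo orientation-preserving isometries, so the squared edge lengths are algebraically independent and $R$ may be treated as a genuine polynomial ring. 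Combining the two steps, $V(P)$ is integral over $R$, hence satisfies one monic relation over $R$ valid for all polyhedra of type $K$; multiplying this relation by the one obtained under reflection (for the mirror image $V$ changes sign while all $q$'s are preserved) produces a relation that is monic and even in $V(P)$, i.e.\ a monic polynomial in $V(P)^2$ over $R$ --- exactly the asserted Sabitov polynomial after renaming $q_e=z_e$.

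The whole difficulty is \textbf{Step 2}, which I expect to be the main obstacle. Over $\R$ it is almost a triviality --- a polyhedral surface with bounded edge lengths has bounded diagonals --- and the real problem is to convert this archimedean boundedness into an algebraic integrality statement. I would do this through the theory of places. Working on a fixed irreducible component of the complexified configuration variety $\Sigma_{\R^3}(K,\bell)$, with function field $F$ (so that every $q_{uv}$ lies in $F$), suppose some diagonal $q_{uv}$ is not integral over $R$; then there is a place $\varphi\colon F\to k'\cup\{\infty\}$ that is finite on $R$ but sends $q_{uv}$, and in general some collection of diagonals, to $\infty$. The contradiction is to be extracted from the Cayley--Menger relations holding identically on the component: for \emph{any} five vertices of $P$ the $6\times 6$ Cayley--Menger determinant of their ten mutual squared distances vanishes, since the five points lie in $\R^3$; this determinant is quadratic in each of the ten variables, and its leading coefficient in a chosen variable $q_{xy}$ equals, up to a constant, the squared area of the triangle on the other three vertices. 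The delicate part --- and the genuine heart of the argument --- is to choose, using the strong connectivity of $K$ together with an induction on the number of vertices, a five-vertex subconfiguration in which the pair realising the worst pole of $\varphi$ occurs with \emph{strictly} larger pole than the remaining nine pairs \emph{and} in which the triangle on the other three vertices is $\varphi$-nondegenerate; for such a choice, applying $\varphi$ to the Cayley--Menger relation forces the pole at $q_{xy}$ to be finite, a contradiction. Sabitov's original proof reaches the same conclusion by an explicit resultant (elimination) computation in place of valuations; in either form, controlling these leading coefficients is where essentially all the work lies. Once Step 2 is available, Step 1 and the short continuity argument close the proof without further difficulty.
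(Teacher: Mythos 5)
Your overall architecture (Sabitov polynomial $\Rightarrow$ finitely many possible volume values $\Rightarrow$ constancy by continuity) is exactly the paper's Corollary~\ref{cor_BC}, and your Step 1 is correct. The fatal problem is Step 2, and it fails precisely in the situation the conjecture is about. If every squared diagonal of a triangulated sphere $K$ were integral over $R=\Q[q_e:e\in E(K)]$, then along any flexion $P_t$ of type $K$ each squared diagonal would be a root of a fixed monic polynomial whose coefficients are frozen (the edge lengths are constant in $t$), hence would take finitely many values, hence by continuity would be constant; but then all pairwise distances between vertices would be preserved and $P_t$ would be induced by isometries of $\R^3$, i.e.\ flexible polyhedra would not exist. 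Bricard's octahedra and Steffen's flexor therefore show directly that squared diagonals are \emph{not} integral over the edge ring: by Lemma~\ref{lem_place} there genuinely exist places finite on all $q_e$ and infinite on some diagonal, so no choice of five-vertex subconfiguration and no Cayley--Menger bookkeeping can yield the contradiction you aim for. The conversion of the archimedean boundedness of diagonals into an integrality statement is exactly what is false, and this is the whole subtlety of Sabitov's theorem: the volume is integral over the edge ring even though the diagonals are not.

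What the actual proofs establish is the finiteness of $\phi(V)$ for every place $\phi$ finite on the squared edge lengths, without any claim about individual diagonals. Sabitov's elimination argument and the Connelly--Sabitov--Walz place-theoretic argument do this by induction on the number of vertices and the genus, simplifying the surface by local moves (surgeries); the diagonals on which $\phi$ is infinite drive the case analysis rather than being ruled out. The paper's own route (valid for all $n\ge 3$) is different again: for such a place $\phi$ one forms the complex $K_\phi$ spanned by the $\phi$-finite pairs, shows by Lemma~\ref{lem_collapse} that it collapses onto a subcomplex of dimension less than $[n/2]$, hence finds a chain $\xi$ with $\partial\xi=[K]$ supported in $K_\phi$, and expresses $V$ as a signed sum of oriented volumes of simplices all of whose edges are $\phi$-finite; one only needs \emph{enough} $\phi$-finite diagonals to bound $[K]$, and which ones these are depends on $\phi$. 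Your Step 2 must be replaced by an argument of one of these kinds; as written, it asserts something that contradicts the existence of flexible polyhedra.
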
 

This conjecture can be generalised to the case of a not necessarily embedded flexible polyhedra. To do this, one needs to introduce the concept of a generalised oriented volume of an arbitrary polyhedron $P:K\to\R^n$. If $P$ is an embedding, then under the volume of~$P$ we mean the volume of the region bounded by the polyhedral surface~$P(K)$. It is natural to say that the surface~$P(K)$ is \textit{positively oriented} if the pullback by~$P$ of its orientation given by the exterior normal at a smooth point coincides with the chosen orientation of~$K$, and is \textit{negatively oriented} if these two orientations are opposite to each other. If $P(K)$ is positively oriented, then we define the  \textit{characteristic function}~$\lambda_P(\bx)$ of~$P$ to be a piecewise constant function on~$\R^n$ that is equal to~$1$ inside the polyhedral surface~$P(K)$, is equal to~$0$ outside the polyhedral surface~$P(K)$, and is undefined on~$P(K)$. Similarly, if $P(K)$ is negatively oriented, then, by definition, the \textit{characteristic function}~$\lambda_P(\bx)$, is equal to~$-1$ inside~$P(K)$ and is equal to~$0$ outside~$P(K)$. Let us define the \textit{characteristic function}~$\lambda_P(\bx)$ of a not necessarily embedded polyhedron $P\colon K\to\R^n$ in the following way. For each point~$\bx\notin P(K)$, we take a generic curve~$\gamma$ going from~$\bx$ to infinity, and denote by~$\lambda_P(\bx)$ the algebraic intersection number of the curve~$\gamma$ and the $(n-1)$-dimensional cycle~$P(K)$. It is easy to see that this intersection index is independent of the choice of~$\gamma$. Then~$\lambda_P(\bx)$ is a piecewise constant function on~$\R^n$ undefined on~$P(K)$. By definition, the \textit{generalised oriented volume} of a polyhedron $P\colon K\to\R^n$ is given by
\begin{equation*}
V_K(P)=\int_{\R^n}\lambda_P(\bx)\,dV,
\end{equation*}
where $dV$ is the standard volume element in~$\R^n$. For an embedded polyhedron, the generalised oriented volume is exactly  the oriented volume of the region bounded by~$P(K)$.  A more general version of the bellows conjecture is as follows.

\begin{conjecture}[The bellows conjecture]
The generalised oriented volume of any flexible polyhedron in~$\R^n$, $n\ge 3$, is constant during the flexion.
\end{conjecture}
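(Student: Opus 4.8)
The plan is to reduce the conjecture to a purely algebraic fact about a universal polynomial and prove it in three stages: first reduce the bellows conjecture to the existence of a ``Sabitov polynomial''; then recast that existence as an integrality statement and apply the theory of places; finally carry out the place analysis by induction on the dimension. I expect only the last stage to be genuinely difficult.

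\textbf{Stage 1: reduction to a monic polynomial relation.} Fix the oriented $(n-1)$-pseudo-manifold~$K$; a polyhedron $P\colon K\to\R^n$ is determined by the vectors $\bx_v=P(v)$, and the generalised oriented volume is given by the \emph{universal} polynomial
\[
V_K(P)=\frac{1}{n!}\sum_{F}\varepsilon_F\,\det\bigl(\bx_{v_1},\ldots,\bx_{v_n}\bigr),
\]
where $F=[v_1\ldots v_n]$ runs over the $(n-1)$-simplices of~$K$ taken with their orientations $\varepsilon_F=\pm1$; that this translation-invariant polynomial computes $\int_{\R^n}\lambda_P\,dV$ follows from the same ``count the signed crossings of a generic ray'' recipe used to define~$\lambda_P$. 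Now suppose one can prove that $V_K$, viewed as a polynomial in the coordinates of the $\bx_v$, satisfies a \emph{monic} polynomial identity
\[
V_K^N+c_1 V_K^{N-1}+\cdots+c_N\equiv 0,
\]
in which every $c_j$ is a polynomial, with rational coefficients, in the squared edge lengths $\ell_{uv}^2=|\bx_u-\bx_v|^2$ over the edges~$[uv]$ of~$K$ \emph{only}. Then along any flexion~$P_t$ all $\ell_{uv}^2$ are constant, hence all $c_j$ are constants, so $V_K(P_t)$ lies in the finite root set of a fixed polynomial and, being continuous in~$t$, is constant. Everything thus reduces to producing this relation.

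\textbf{Stage 2: integrality and places.} Let $F$ be the field of rational functions over~$\Q$ in the coordinates of all vertices~$\bx_v$, and let $R\subset F$ be the subring generated by the elements $\ell_{uv}^2$ for edges~$[uv]$. The relation of Stage~1 says precisely that $V_K\in F$ is integral over~$R$. By the valuation-theoretic criterion for integral dependence, $V_K$ is integral over~$R$ if and only if $\phi(V_K)\neq\infty$ for every place $\phi\colon F\to L\cup\{\infty\}$, trivial on~$\Q$, whose restriction to~$R$ is finite, i.e.\ with $\phi(\ell_{uv}^2)\neq\infty$ for all edges. It therefore suffices to establish this one finiteness property of places, which is where the geometry enters.

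\textbf{Stage 3: the place analysis, and the main obstacle.} A place~$\phi$ as above determines a (possibly partly infinite) limiting configuration $\phi(\bx_v)$; finiteness of $\phi(\ell_{uv}^2)$ on every edge forces the vertices that escape to infinity to do so coherently along edges --- a connected subcomplex of escaping vertices acquires a common asymptotic direction, with escape rates ordered by the valuation. The task is to show that, despite this, $V_K$ does not blow up. The natural attack is a double induction, on~$n$ (with Sabitov's three-dimensional theorem as the base) and on the number of vertices: one localises~$\phi$, removes the finite part of the configuration, slices the polyhedron by hyperplanes transverse to the escaping directions, and rewrites $V_K$ as a sum of volume polynomials of polyhedra of strictly smaller dimension or with strictly fewer vertices, plus terms that are visibly finite under~$\phi$, with Cayley--Menger identities handling the smallest cases. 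I expect the real difficulty to lie exactly in this decomposition: for $n>3$ the combinatorics of which monomials of~$V_K$ can dominate under an \emph{arbitrary} place is delicate, and splitting the volume polynomial compatibly with a general degeneration is not routine --- this is precisely what makes the higher-dimensional bellows conjecture substantially harder than the three-dimensional case, where cutting the surface along a cycle suffices. A correct argument must isolate the right combinatorial invariant of the degeneration bounding the pole order of~$V_K$ and show it is never positive. (Self-intersection and the absence of diagonal-length equations in~\eqref{eq_system} cause no trouble: the generalised volume is by construction the polynomial~$V_K$, and the whole point of the relation is that the~$c_j$ involve only edge lengths, which a flexion fixes.)
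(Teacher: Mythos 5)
Your Stages 1 and 2 coincide with the paper's reduction: the bellows conjecture is deduced from the existence of a Sabitov polynomial (Theorem~\ref{theorem_relation}) by the finitely-many-roots-plus-continuity argument, and that existence is recast, via the place-theoretic criterion for integral dependence (Lemma~\ref{lem_place}), as the statement that every place of $\Q(\{x_{v,s}\})$ finite on the squared edge lengths is finite on~$V_K$. But Stage 3, which is the entire mathematical content of the theorem, is not carried out: you only describe an intended strategy and yourself flag that the decisive decomposition is missing. So the proposal has a genuine gap --- the finiteness of $\phi(V_K)$ is never established.

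Moreover, the strategy you sketch for Stage 3 (double induction on the dimension~$n$ and on the number of vertices, with Sabitov's three-dimensional theorem as base, slicing the degenerating configuration and rewriting~$V_K$ as volumes of smaller polyhedra) is exactly the route the paper explains cannot work beyond low dimensions: the inductive proofs of Sabitov and of Connelly--Sabitov--Walz rest on the fact that a two-dimensional simplicial manifold can be simplified by local moves monotonically in the number of vertices, and no analogue of this holds for pseudo-manifolds of dimension $\ge 3$; the extension to $\R^4$ already required a more delicate induction plus extra algebraic geometry, and the approach stops there. The actual proof for all~$n$ abandons induction on parameters of the polyhedron. Its key ingredient is Lemma~\ref{lem_collapse}: for any place~$\phi$, the clique complex~$K_\phi$ of the graph of pairs $[ij]$ with $\phi(q_{ij})\ne\infty$ collapses onto a subcomplex of dimension less than~$[n/2]$. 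Since $[n/2]<n-1$, the $(n-1)$-dimensional homology of~$K_\phi$ vanishes, so the fundamental class $[K]\subseteq K_\phi$ bounds a simplicial chain~$\xi$ supported in~$K_\phi$; every edge of that support (including diagonals of~$P$) has finite image under~$\phi$, hence so does the oriented volume of each $n$-simplex of~$\xi$, hence so does $V_K(P)=\sum_k c_k V_{\mathrm{or}}(P(\Delta_k))$. To repair your argument you would need either to prove a statement of the strength of Lemma~\ref{lem_collapse} (a global combinatorial--topological control of \emph{arbitrary} degenerations, not just a pole-order bookkeeping) or to find a genuinely different mechanism; as written, the ``right combinatorial invariant'' you ask for is precisely what is absent.
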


One of the most important breakthroughs in theory of flexible polyhedra was the proof of the bellows conjecture for flexible polyhedra in~$\R^3$ by Sabitov~\cite{Sab96}, see also~\cite{Sab98a},~\cite{Sab98b}. Another proof was obtained by Connelly, Sabitov, and Walz~\cite{CSW97}. The proof of the bellows conjecture was based on a wonderful discovery by Sabitov of the fact that the generalised oriented volume of any (not necessarily flexible) simplicial polyhedron satisfies a monic polynomial equation with coefficients determined solely by the combinatorial structure and the edge lengths of the polyhedron. This result contrasts to the two-dimensional case, since the only polygon for which the (generalised) oriented area satisfies such monic polynomial equation is a triangle whose area is given by Heron's formula. A good survey of the works on the three-dimensional bellows conjecture as well as of some other results and problems on flexible polyhedra can be found in~\cite{Sab11}.

Sabitov's approach to the proof of the existence of a monic polynomial equation for the volume cannot be generalised to higher dimensions, see below. Nevertheless, the author suggested a new approach that yielded the same result for the Euclidean spaces of all dimensions $n\ge 4$. The following theorem is due to Sabitov~\cite{Sab96} for $n=3$ (see also~\cite{Sab98a},~\cite{Sab98b},~\cite{CSW97}) and to the author~\cite{Gai11},~\cite{Gai12} for $n\ge 4$. 

\begin{theorem}\label{theorem_relation}
Let $K$ be an oriented $(n-1)$-dimensional pseudo-manifold,  $n\ge 3$. For a simplicial polyhedron $P\colon K\to\R^n$, we denote by~$\mathbf{q}$ the set of the squares of the edge lengths of~$P$, and by~$V$ the generalised oriented volume of~$P$. Then there exists a monic with respect to~$V$ polynomial relation
\begin{equation*}
V^{2N}+a_1(\mathbf{q})V^{2N-2}+a_2(\mathbf{q})V^{2N-4}+\cdots+a_N(\mathbf{q})=0
\end{equation*} 
that holds for all polyhedra $P\colon K\to\R^n$ of combinatorial type~$K$. Here $a_j(\mathbf{q})$ are polynomials with rational coefficients, and the numbers~$N$ and the polynomials~$a_j(\mathbf{q})$ are determined solely by the pseudo-manifold~$K$.
\end{theorem}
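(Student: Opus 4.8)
The plan is to deduce the theorem from a single algebraic assertion: that the generalised oriented volume, viewed as a polynomial function of the vertex coordinates, is \emph{integral} over the subring generated by the squared edge lengths. Fix the simplex $[w_1\ldots w_n]$ and the points $\ba_i$ with rational coordinates as in Section~2, so that a polyhedron $P\colon K\to\R^n$ with $P(w_i)=\ba_i$ is a point of $\R^{(m-n)n}$ with coordinates $x_{v,s}$. On this space both $q_{uv}=\sum_{s}(x_{u,s}-x_{v,s})^2$ and
\[
V=\frac{1}{n!}\sum_{\sigma}\varepsilon_\sigma\det\bigl(\bx_{v_1^\sigma},\ldots,\bx_{v_n^\sigma}\bigr)
\]
(the sum over the oriented facets $\sigma=[v_1^\sigma\ldots v_n^\sigma]$ of $K$ — an elementary expression for $V_K(P)$ coming from the divergence theorem) are polynomials with rational coefficients. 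Let $R=\Q[\mathbf{q}]$ be the subring of $\Q[x_{v,s}]$ they generate and $L$ the fraction field of $\Q[x_{v,s}]$. It suffices to prove $V$ is integral over $R$: given a monic relation $f(V)=0$ with $f\in R[T]$, the reflection $\iota$ of $\R^n$ in a coordinate hyperplane induces an automorphism of $L$ fixing every $q_{uv}$ (hence $f$) and sending $V$ to $-V$, so $f(-V)=0$ as well; then $g(T)=(-1)^{\deg f}f(T)f(-T)$ lies in $R[T]$, is monic, is a polynomial in $T^2$, and satisfies $g(V)=0$, giving exactly the relation $V^{2N}+a_1(\mathbf{q})V^{2N-2}+\cdots+a_N(\mathbf{q})=0$. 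This is then the zero polynomial in $\Q[x_{v,s}]$, hence holds at every real point, i.e.\ for every polyhedron of type $K$ (for arbitrary spatial position use that $\mathbf{q}$ is isometry invariant and the relation is even in $V$, plus a density argument for degenerate normalisations); and $N$ and the $a_j$ depend only on $K$ because $L$, $R$, $V$ and $\iota$ do.

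To prove integrality I would invoke the valuative criterion: the integral closure of $R$ in $L$ is the intersection of all valuation rings of $L$ containing $R$, so it is enough to show that for every place $\varphi\colon L\to F\cup\{\infty\}$ which is finite on every $q_{uv}$ one has $\varphi(V)\ne\infty$. This is a statement about ``polyhedra over a valued field'': the vertices acquire coordinates in $F\cup\{\infty\}$, all squared edge lengths are finite, and one must conclude that the volume is finite. The point that makes this nontrivial is \emph{cancellation at infinity}: finiteness of $\varphi$ on $q_{uv}=\sum_s(x_{u,s}-x_{v,s})^2$ does \emph{not} force the individual differences $x_{u,s}-x_{v,s}$ to be finite, since a sum of nonzero squares can vanish in a residue field in which $-1$ is a sum of squares (e.g.\ whenever $F\supseteq\C$). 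Thus under a place the vertices, and any simplices built from them, may escape to infinity while $V$ nonetheless stays finite after cancellations — and any proof must control precisely this.

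The place estimate would be carried out by induction on $n$ and on the number of vertices of $K$. For $n=3$ this is Sabitov's theorem: triangulate $K$ using only its own vertices (for instance $\widetilde K=v_0*\overline{K\setminus\mathrm{st}(v_0)}$ for a chosen vertex $v_0$, whose boundary is $K$ and whose tetrahedra have as edges the edges of $K$ together with the diagonals $[v_0w]$), so $V$ becomes a signed sum of tetrahedron volumes, each with $(\text{vol})^2$ a Cayley--Menger polynomial in six squared edge lengths. Hence $V$ is integral over $\Q\bigl[\mathbf{q},\{q_{v_0w}\}\bigr]$, and it remains to show each squared diagonal length $q_{v_0w}$ is integral over $R$, i.e.\ that a place finite on all edge lengths is finite on these diagonals — this is the technical core, proved by induction on the number of vertices via an edge contraction or the analysis of a vertex star and its link polygon, and it is here that cancellation at infinity is tracked combinatorially. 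For $n\ge 4$ the bounded region need not admit a triangulation without new (Steiner) vertices, so the decomposition into simplices must be replaced: one writes $V$ as a sum over facets of the signed volumes of cones from one generic point and, peeling off a vertex $v$, expresses $V(P)$ through $V(P')$ for a pseudo-manifold $P'$ with one fewer vertex plus ``prismatic'' pieces that reduce to $(n-1)$-dimensional volume data on the link of $v$; the induction on $n$ then applies, but only after one establishes a relative version of the whole theorem for pseudo-manifolds with boundary, since the pieces produced are not closed.

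The hardest step, and the main obstacle throughout, is exactly this place estimate in the presence of cancellation at infinity: in dimension three, the integrality of the squared diagonal lengths over the squared edge lengths (Sabitov \cite{Sab96}); in higher dimensions, both the replacement of triangulations by cone decompositions and the relative bookkeeping that carries the non-archimedean bounds through the induction on dimension (the author \cite{Gai11},\cite{Gai12}). Everything else — the reduction to integrality, the evenness of the relation, and the passage from a polynomial identity to a statement about all polyhedra of a given combinatorial type — is formal.
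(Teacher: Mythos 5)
Your formal frame is the same as the paper's: reduce the theorem to showing that every place $\phi$ of the coordinate field that is finite on all squared edge lengths is finite on $V$ (this is exactly how Lemma~\ref{lem_place} is used), and the evenness trick and the passage from a polynomial identity in the vertex coordinates to all polyhedra of type~$K$ are indeed routine. For $n=3$ your plan (cone from a vertex, integrality of the squared diagonals, induction on the number of vertices) is Sabitov's and is fine as a citation.

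The genuine gap is the place estimate for $n\ge 4$, which is the whole content of the theorem beyond dimension three, and your proposed route is the one the paper explicitly identifies as a dead end. Peeling off a vertex, inducting on the number of vertices and on $n$, and proving a relative version for pseudo-manifolds with boundary is an induction on parameters of the polyhedron; such inductions (Sabitov's and the Connelly--Sabitov--Walz one) rest on the fact that a two-dimensional simplicial manifold can be simplified by local moves monotonically in the number of vertices, and no analogue of this holds for pseudo-manifolds (even manifolds) of dimension $\ge 3$. The author's extension to $\R^4$ already needed a more delicate induction plus an extra input from algebraic geometry, and it stops there. The actual proof for all $n$ avoids any induction on the polyhedron: given a place $\phi$, one forms the graph $\Gamma_\phi$ on all $m$ vertices whose edges are the pairs $[ij]$ with $\phi(q_{ij})\ne\infty$, takes its clique complex $K_\phi$, and proves (Lemma~\ref{lem_collapse}) that $K_\phi$ collapses onto a subcomplex of dimension less than $[n/2]$. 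Since $[n/2]<n-1$, the group $H_{n-1}(K_\phi)$ vanishes, so the fundamental class $[K]$ bounds an $n$-chain $\xi$ supported in $K_\phi$; writing $V_K(P)=\sum_k c_k V_{\mathrm{or}}(P(\Delta_k))$ over the simplices of $\xi$, every edge occurring (edges of $K$ or diagonals) has $\phi$-finite squared length, and Cayley--Menger gives $\phi(V)\ne\infty$. Your cone-from-one-vertex decomposition is a special chain $\xi$, but the place has no reason to be finite on the diagonals from that particular vertex; what is needed, and what you are missing, is the guarantee that \emph{some} chain with $\partial\xi=[K]$ exists all of whose edges are $\phi$-finite, and that is precisely the collapsibility lemma. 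Without it (or a substitute), your outline for $n\ge 4$ is a program rather than a proof, and it is the program known to break down in high dimensions.
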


A monic with respect to~$V$ polynomial~$Q(V,\mathbf{q})$ such that $Q(V,\mathbf{q})=0$ for all polyhedra $P\colon K\to\R^n$ is called a \textit{Sabitov polynomial} for polyhedra of combinatorial type~$K$.

\begin{corollary}\label{cor_BC}
The generalised oriented volume of any flexible polyhedron in~$\R^n$, $n\ge 3$, is constant during the flexion. 
\end{corollary}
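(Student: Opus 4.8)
The plan is to read Corollary~\ref{cor_BC} off Theorem~\ref{theorem_relation} essentially for free, using only the elementary fact that a continuous real-valued function on an interval whose range is a finite set is constant.

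First I would fix a flexible polyhedron $P_t$, $t\in(\alpha,\beta)$, of some combinatorial type~$K$. By the definition of a flexion, every edge length of~$P_t$ is independent of~$t$, so the tuple~$\mathbf{q}$ of squares of the edge lengths has a constant value~$\mathbf{q}_0$ along the flexion. Substituting $\mathbf{q}=\mathbf{q}_0$ into the relation furnished by Theorem~\ref{theorem_relation}, I find that for every~$t$ the number $V(t):=V_K(P_t)$ is a root of the univariate polynomial
\begin{equation*}
Q_0(V)=V^{2N}+a_1(\mathbf{q}_0)\,V^{2N-2}+\cdots+a_N(\mathbf{q}_0).
\end{equation*}
Since $Q_0$ is monic of degree~$2N$, it is not the zero polynomial, hence has at most~$2N$ distinct roots. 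Therefore $V(t)$ takes values in a fixed finite subset $\mathcal{R}\subset\R$ that does not depend on~$t$.

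Next I would check that $t\mapsto V(t)$ is continuous. Choosing the origin of~$\R^n$ and decomposing the cycle~$P_t(K)$ into cones over its top-dimensional simplices, one has the standard expression of the generalised oriented volume as a signed sum of simplex volumes,
\begin{equation*}
V_K(P_t)=\frac{1}{n!}\sum_{\tau}\det\bigl(P_t(v_0^{\tau}),\ldots,P_t(v_{n-1}^{\tau})\bigr),
\end{equation*}
where $\tau=[v_0^{\tau}\ldots v_{n-1}^{\tau}]$ ranges over the oriented $(n-1)$-simplices of~$K$ and $\det$ denotes the determinant of the $n\times n$ matrix with the indicated columns; this identity follows at once from $V_K(P)=\int_{\R^n}\lambda_P\,dV$. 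In particular $V_K(P_t)$ is a polynomial, hence a continuous function, of the coordinates of the vertices $P_t(v)$, and these depend continuously on~$t$ because~$P_t$ is a continuous family. Thus $V(t)$ is continuous on~$(\alpha,\beta)$.

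Finally, a continuous map from the connected interval~$(\alpha,\beta)$ to the finite (hence totally disconnected) set~$\mathcal{R}$ must be constant, so $V(t)$ is constant during the flexion, which is precisely the assertion of Corollary~\ref{cor_BC}. I do not expect any genuine obstacle here: the entire content of the corollary is absorbed into Theorem~\ref{theorem_relation}, and the only point requiring a (routine) verification is the continuity of the generalised oriented volume, which the cone-decomposition formula above supplies.
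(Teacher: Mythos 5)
Your argument for the simplicial case is exactly the paper's: with the edge lengths frozen along the flexion, Theorem~\ref{theorem_relation} gives a fixed monic polynomial of which $V(t)$ is a root, so $V(t)$ ranges in a finite set, and continuity plus connectedness of the parameter interval forces constancy. Your explicit cone-decomposition formula for $V_K(P_t)$ is a reasonable way to make the continuity (which the paper simply asserts) completely concrete, and it is correct.

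However, there is one step of the paper's proof that you skip, and your closing remark that ``the entire content of the corollary is absorbed into Theorem~\ref{theorem_relation}'' is where the gap shows. Corollary~\ref{cor_BC} is stated for \emph{any} flexible polyhedron in~$\R^n$, and in this paper a polyhedron is an arbitrary oriented closed $(n-1)$-dimensional polyhedral surface, with a flexion required only to keep each face congruent to itself; the faces need not be simplices. Theorem~\ref{theorem_relation}, on the other hand, applies only to simplicial polyhedra $P\colon K\to\R^n$ with $K$ a pseudo-manifold, so you cannot invoke it directly for a general flexible polyhedron ``of some combinatorial type~$K$'' as you do. The missing reduction is short but genuine: pass to a simplicial subdivision of the polyhedral surface. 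This only introduces new hinges, so the given flexion is still a flexion of the subdivided (now simplicial) polyhedron, and the lengths of the newly created edges (diagonals of the original faces) are constant in~$t$ because each original face remains congruent to itself during the flexion. After this observation your argument applies verbatim and yields the corollary in the generality in which it is stated.
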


\begin{proof}
Any closed polyhedral surface in~$\R^n$ has a simplicial subdivision. Passing to this subdivision, we introduce new hinges. Hence, all flexions that have existed before, do still exist, and some new flexions may appear. Therefore the assertion of Corollary~\ref{cor_BC} for arbitrary flexible polyhedra will follow immediately from the assertion of Corollary~\ref{cor_BC} for simplicial flexible polyhedra.

Since a nonzero polynomial has finitely many roots,  Theorem~\ref{theorem_relation} implies that the generalised oriented volume of a simplicial polyhedron of the prescribed combinatorial type~$K$ and the prescribed set of edge lengths~$\bell$ can take only finitely many values. On the other hand, the generalised oriented volume of a flexible polyhedron changes continuously. Hence it is constant. 
\end{proof}

\begin{remark}
In fact, the result obtained by the author in~\cite{Gai12} is stronger than Theorem~\ref{theorem_relation}. Namely, we can replace the requirement that the polyhedron is simplicial with a weaker requirement that all two-dimensional faces of the polyhedron are triangles. (In dimension~$3$ these two conditions are equivalent.) This implies that the volume of a polyhedron remains constant not only during flexions but during all deformations such that the combinatorial type does not change and all two-dimensional faces remain congruent to themselves.
\end{remark}

Now, let us discuss some ideas behind the proof of Theorem~\ref{theorem_relation}. For any five points in~$\R^3$, the squares of the pairwise distances between them satisfy  a polynomial relation, which is called the Cayley--Menger relation and is equivalent to the degeneracy of the Gram matrix of the vectors from one of the points to the other four points. For a polyhedron in~$\R^3$, the Cayley--Menger relations for $5$-tuples of its vertices yield a system of polynomial relations among the squares of the lengths of edges and diagonals. Sabitov's original prove of Theorem~\ref{theorem_relation} for $n=3$ was based on a rather complicated technique for elimination the squares of the lengths of diagonals by means of resultants. Later, Connelly, Sabitov, and Walz~\cite{CSW97} noticed that this technique can be replaced with the usage of theory of places of fields, which makes the proof more involved but less cumbersome.

Recall that a \textit{place} of a field~$E$ is a mapping $\phi\colon E\to F\cup\{\infty\}$ to a field~$F$, with an extra element~$\infty$, such that $\phi(1)=1$, $\phi(a+b)=\phi(a)+\phi(b)$ and $\phi(ab)=\phi(a)\phi(b)$ whenever the right-hand sides are defined. Here we assume that $c+\infty=\infty$ for all $c\in F$, and $c\cdot\infty=\infty$ for all $c\in F\cup\{\infty\}\setminus\{0\}$. The expressions $\infty+\infty$ and $\infty\cdot 0$ are undefined. Elements $c\in F$ are said to be \textit{finite\/}. 
\begin{lemma}[cf.~{\cite[p.~12]{Lan72}}]\label{lem_place}
Let $R$ be a ring with unity contained in a field~$E$, and let $a$ be an element of~$E$. Then $a$ is integral over~$R$ if and only if every place~$\phi$ of~$E$ that is finite on~$R$ is finite on~$a$. 
\end{lemma}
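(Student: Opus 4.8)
The plan is to prove both implications separately, with the substance lying in the converse direction. First, suppose $a$ is integral over $R$, so that $a^k + r_{k-1}a^{k-1} + \cdots + r_0 = 0$ for some $r_0,\ldots,r_{k-1}\in R$. Let $\phi\colon E\to F\cup\{\infty\}$ be a place that is finite on $R$. If $a$ were not finite under $\phi$, i.e. $\phi(a)=\infty$, then I would divide the integral equation by $a^k$ and apply $\phi$: the element $b=a^{-1}$ satisfies $\phi(b)=0$, and $1 + r_{k-1}b + r_{k-2}b^2 + \cdots + r_0 b^k = 0$ in $E$. Applying $\phi$ term by term, each summand after the first maps to $\phi(r_i)\cdot 0 = 0$ (the products are defined since $\phi(r_i)$ is finite), so we would get $\phi(1)=0$, contradicting $\phi(1)=1$. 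Hence $\phi(a)$ is finite. This direction is essentially bookkeeping with the axioms of a place.

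For the converse, suppose $a$ is \emph{not} integral over $R$; I must produce a place $\phi$ of $E$, finite on $R$, with $\phi(a)=\infty$. The key step is to work inside the ring $R[1/a]\subseteq E$ and consider the element $b=1/a$. The claim is that $b$ is not a unit in $R[b]$: if it were, then $1/b = a$ would lie in $R[b]$, giving an expression $a = c_m b^m + \cdots + c_1 b + c_0$ with $c_i\in R$, and multiplying through by $a^m$ would exhibit $a$ as a root of a monic polynomial over $R$, contradicting non-integrality. Therefore $b$ lies in some maximal ideal $\mathfrak{m}$ of the ring $R[b]$. Now I would invoke the standard extension theorem for places (Chevalley's theorem, as in Lang's \emph{Introduction to Algebraic Geometry}, precisely the reference cited): any homomorphism from a subring of $E$ to a field extends to a place of $E$. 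Concretely, the quotient map $R[b]\to R[b]/\mathfrak{m}$ into the field $F_0 = R[b]/\mathfrak{m}$ extends to a place $\phi\colon E\to F\cup\{\infty\}$ for a suitable field $F\supseteq F_0$. This $\phi$ is finite on $R[b]\supseteq R$, and $\phi(b)=0$ since $b\in\mathfrak{m}$; consequently $\phi(a)=\phi(1/b)=\infty$, as desired.

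The main obstacle is the existence of the extension of the homomorphism $R[b]\to F_0$ to a place of the whole field $E$: this is the Chevalley extension theorem for places, and it is exactly the content the lemma is quoting from \cite[p.~12]{Lan72}, so in the write-up I would cite it rather than reprove it. The only delicate points to check carefully are that the resulting $\phi$ is finite on all of $R$ (immediate, since $R\subseteq R[b]$ and $\phi$ is finite on $R[b]$) and that $\phi(a)=\infty$ rather than merely undefined — this follows because $\phi(b)=0$ with $b\ne 0$ forces $\phi(1/b)=\infty$ by the multiplicativity axiom $\phi(b)\phi(1/b)=\phi(1)=1$, which cannot hold with $\phi(1/b)$ finite. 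Everything else is routine manipulation of the place axioms.
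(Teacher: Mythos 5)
Your proof is correct: the easy direction by dividing the integral relation by $a^k$ and applying the place axioms, and the converse by placing $b=1/a$ in a maximal ideal of $R[b]$ and extending the quotient homomorphism to a place of $E$ via Chevalley's extension theorem, is exactly the standard argument behind the cited result. The paper gives no proof of this lemma at all---it simply refers to Lang's book---so your write-up follows the same (cited) route, with the appeal to the place-extension theorem being precisely the content that the reference supplies.
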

This lemma is applied in the following way. Take for~$E$ the field~$\Q(\{x_{v,s}\})$ of rational functions in the coordinates~$x_{v,s}$ of vertices of the polyhedron. Then the squares of edge lengths~$q_{uv}=\ell_{uv}^2$ are elements of~$E$. Take for~$R$ the $\Q$-algebra generated by all~$q_{uv}$ such that $[uv]$ is an edge of~$K$, and take for~$a$ the generalised oriented volume~$V$, which is also an element of~$E$. Then one needs to prove that every place~$\phi$ of~$E$ that is finite on~$R$ is finite on~$V$.

Though the algebraic tools used in Sabitov's original proof and in the proof due to Connelly, Sabitov, and Walz  are different, both proofs use the same induction on certain parameters of the polyhedron (genus, number of vertices). The key role in both proofs is played by the fact that a two-dimensional simplicial manifold can be simplified by certain local moves monotonically with respect to the number of vertices, and moreover, there is a rich selection of opportunities to do so. For pseudo-manifolds  (and even for manifolds) of dimensions~$3$ and higher,  no analog of this fact is true. Though the author~\cite{Gai11}  generalized both proofs to polyhedra in~$\R^4$ by using a more delicate induction, and an additional result of algebraic geometry, it became clear that this approach cannot be used in higher dimensions.
 
Our approach in~\cite{Gai12}, which allowed to prove Theorem~\ref{theorem_relation} for all~$n$, is different. Instead of proceeding by induction on some parameters of the polyhedron, we temporarily forget about the polyhedron and study the properties of places of the field~$\Q(\{x_{i,s}\})$ of rational functions in the coordinates of~$m$ points~$\bx_1,\ldots,\bx_m$ in~$\R^n$. The key lemma is as follows.

\begin{lemma}\label{lem_collapse}
Let $\phi\colon \Q(\{x_{i,s}\})\to F\cup\{\infty\}$ be a place. Let $\Gamma_{\phi}$ be the graph on the vertex set~$\{1,\ldots,m\}$ such that $[ij]$ is  an edge of~$\Gamma_{\phi}$ if and only if $\phi(q_{ij})\ne\infty$, where $q_{ij}=|\bx_i-\bx_j|^2$. Let $K_{\phi}$ be the clique complex of~$\Gamma_{\phi}$, i.\,e., the simplicial complex whose simplices are spanned by cliques of~$\Gamma_{\phi}$. Then~$K_{\phi}$ collapses on a subcomplex of dimension less than~$[n/2]$.
\end{lemma}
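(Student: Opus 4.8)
The strategy is to analyze, for a given place $\phi$, which "rigidity-type" relations among the squared distances $q_{ij}$ survive under $\phi$, and to show that the surviving relations force the clique complex $K_\phi$ to be collapsible onto something low-dimensional. The starting observation is the Cayley--Menger relation: for any $n+2$ points $\bx_{i_0},\ldots,\bx_{i_{n+1}}$ in $\R^n$, the $(n+2)\times(n+2)$ Cayley--Menger determinant in the $q_{i_a i_b}$ vanishes identically, and this determinant is monic (up to sign and a constant) in any one of the entries, say $q_{i_0 i_1}$, of degree $n$. More generally, for \emph{every} edge $[ij]$ of $\Gamma_\phi$, I want a polynomial relation expressing $q_{ij}$ integrally over the subring generated by those $q_{kl}$ with $[kl]$ an edge of $\Gamma_\phi$ lying in a smaller configuration. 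The point of the clique complex is that a clique of $\Gamma_\phi$ of size $n+2$ already yields, via Cayley--Menger, a monic relation; so if $K_\phi$ contained an $(n+1)$-simplex, one of its edges $q_{ij}$ would be \emph{forced} finite by $\phi$ being finite on the other entries, and this is the mechanism that propagates finiteness.

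**Key steps, in order.** First I would record the Cayley--Menger relations and, more importantly, the \emph{Gram/rank} reformulation: $\phi$ being finite on a set of $q_{ij}$ means the corresponding "partial squared-distance matrix" has a $\phi$-image; the rank of a Gram matrix of vectors in $\R^n$ is at most $n$, which globally constrains which patterns of finite $q_{ij}$ are consistent. Second, I would set up the collapse combinatorially: a collapse of $K_\phi$ removes a free pair $(\tau, \sigma)$ where $\sigma$ is a maximal simplex and $\tau$ a free codimension-one face. I would argue that as long as $K_\phi$ has a simplex of dimension $\ge [n/2]$, one can find a vertex $i$ and a facet of its link that behaves like a "half-dimensional rigid piece": the relevant squared distances around $i$ are algebraically determined (integrally) by fewer of them, because a simplex on roughly $n/2$ vertices, viewed in $\R^n$, has enough room that its remaining edge lengths are \emph{not} independent once enough are fixed — here the bound $[n/2]$ (rather than $n$) must come from pairing up: a $q_{ij}$ and the vertices $i,j$ each "cost" dimensions, and a clique of size $k$ carries $\binom{k}{2}$ squared distances but only $nk - \binom{n+1}{2}$ degrees of freedom, so for $k$ around $n/2$ the distances start satisfying integral relations that let $\phi$ stay finite. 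Third, using Lemma~\ref{lem_place} locally, I would translate "$\phi$ finite on the boundary distances of a half-dimensional simplex forces finiteness on an interior one" into the existence of a free face, and then iterate, collapsing $K_\phi$ down until no simplex of dimension $\ge [n/2]$ remains.

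**The main obstacle.** The genuinely hard part is not the Cayley--Menger bookkeeping but pinning down \emph{exactly why the threshold is $[n/2]$} and producing the free face at each stage in a way that terminates. A place can kill many $q_{ij}$ at once (send them to $\infty$), so $\Gamma_\phi$ can be quite sparse and irregular; one must show that whenever a clique of size $\ge [n/2]+1$ is present, there is genuinely a \emph{removable} pair — i.e. a maximal simplex $\sigma$ of $K_\phi$ with a free facet $\tau$ — rather than merely some algebraic dependence with no combinatorial collapse attached. This requires a careful matching between the algebra (which $q_{ij}$ are integral over which subring, via a monic relation whose coefficients $\phi$ keeps finite) and the combinatorics of maximal simplices and their links, and it is where one expects to invoke the precise dimension count for realizability of partial squared-distance data in $\R^n$, together with an induction on the number of vertices $m$ so that after deleting a collapsed pair one is in the same situation with a smaller complex. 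Handling the boundary case $n$ odd versus $n$ even (hence $[n/2]$ versus $n/2$) cleanly, and making sure the collapse does not stall at a complex that is low-dimensional but not yet below $[n/2]$, is the delicate endgame.
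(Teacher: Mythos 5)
There is a genuine gap, and it sits exactly where you locate the difficulty yourself: nothing in your plan can produce the threshold $[n/2]$, and the dimension count you offer for it is incorrect. For $k\le n+1$ points in $\R^n$ the $k(k-1)/2$ squared distances are algebraically independent; the first polynomial relations (Cayley--Menger) appear only for $n+2$ points. So there are no ``integral relations'' among the $q_{ij}$ on a clique of size about $n/2$, and no amount of Cayley--Menger bookkeeping confined to cliques can see the bound $[n/2]$ --- at best it could address simplices of dimension about $n$, which is far from the statement. Moreover, the Cayley--Menger determinant of $n+2$ points is of degree $2$ (not $n$) in any single entry $q_{uv}$, and its leading coefficient is the Cayley--Menger determinant of the other $n$ points, not a unit; since $\phi$ may annihilate that coefficient, finiteness does not propagate the way you assert, and this is precisely the obstruction that forced the delicate inductions of Sabitov and Connelly--Sabitov--Walz in dimension $3$ and, as the survey explains, blocks that route for $n\ge 5$. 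Finally, even granting some integrality statements, you give no construction of a free face: ``there is an algebraic dependence'' does not by itself yield an elementary collapse of a flag complex, and you acknowledge this link is missing.

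For comparison (the survey states the lemma without proof; the argument is in~\cite{Gai12}): the actual proof does not work in the subfield generated by the $q_{ij}$ at all, but uses the full field $\Q(\{x_{i,s}\})$ of coordinates. When $\phi(q_{uv})=\infty$ one renormalizes the coordinates by an element of maximal growth and applies $\phi$ to the rescaled configuration; the edges of $K_\phi$ that are ``at infinity'' then correspond to vectors that become isotropic for the quadratic form induced over the residue field from the standard form in $n$ variables. Since a totally isotropic subspace of such a form has dimension at most $[n/2]$, cliques of the relevant type are bounded, and the collapse of $K_\phi$ onto a subcomplex of dimension less than $[n/2]$ is built explicitly from this stratification of the vertices by growth under the place. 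In short, the key mechanism is the behaviour of $\phi$ on coordinates where distances blow up (isotropy), not algebraic dependence among finite distances; your proposal never engages with the infinite edges beyond deleting them, so it cannot be repaired without changing the approach.
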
 

Each simplicial complex~$K$ on~$m$ vertices can be naturally considered as a subcomplex of the $(m-1)$-dimensional simplex~$\Delta$ with the same vertices. If~$K$ is an oriented $(n-1)$-dimensional pseudo-manifold, then its fundamental class~$[K]$ becomes a boundary in the simplicial chain complex of~$\Delta$, since $\Delta$ is contractible. For  each polyhedron $P\colon K\to\R^n$, the mapping~$P$ can be extended to an affine linear mapping of~$\Delta$ to~$\R^n$, which will be also denoted by~$P$. For each $n$-dimensional simplicial chain $\xi$ in~$\Delta$ such that $\partial\xi=[K]$,  the image $P(\xi)$ can be naturally regarded as a \textit{generalised triangulation} of the interior of~$P(K)$. In particular, if $\xi=\sum_kc_k\Delta_k$, then $V_K(P)=\sum_kc_kV_{\mathrm{or}}(P(\Delta_k))$, where $V_{\mathrm{or}}$ denotes the oriented volume of an $n$-dimensional simplex in~$\R^n$. 

Let $\phi$ be a place of the field~$\Q(\{x_{v,s}\})$ of rational functions in the coordinates of vertices of the polyhedron of combinatorial type~$K$. If $\phi$ is finite on all squares of the edge lengths of the polyhedron, then $K\subseteq K_{\phi}$. Since $n\ge 3$, we have $[n/2]<n-1$. Hence Lemma~\ref{lem_collapse} implies that the $(n-1)$-dimensional homology group of~$K_{\phi}$ vanishes. Therefore a chain~$\xi$ satisfying $\partial\xi=[K]$ can be chosen so that its support is contained in~$K_{\phi}$. (The \textit{support} of a chain is the union of all simplices entering this chain with nonzero coefficients.) Then~$\phi$ is finite on~$q_{uv}$ for all edges~$[uv]$ of the support of~$\xi$.
(Notice that edges of the support of~$\xi$ may be diagonals of the initial polyhedron.) It follows easily that~$\phi$ is finite on the oriented volume of any $n$-dimensional simplex entering~$\xi$, hence, is finite on~$V=V_K(P)$, which completes the proof of Theorem~\ref{theorem_relation}.

\section{The bellows conjecture for non-Euclidean spaces}

The definition of a \textit{generalised oriented volume} of a polyhedron $P\colon K\to\Lambda^n$ is literally the same as for~$\R^n$. The spherical case is more difficult even for embedded polyhedra: We do not know which of the two connected  components of the space $S^n\setminus P(K)$ should be considered as the interior of the polyhedron. For arbitrary polyhedra, this phenomenon becomes apparent as follows. We cannot define the characteristic function~$\lambda_P(\bx)$, since there is no infinity in the sphere. This difficulty can be overcome in the following way. For a polyhedron $P\colon K\to S^n$, we can define its generalised oriented volume as an element of the group~$\R/\sigma_n\Z$, where $\sigma_n$ is the volume of~$S^n$. Indeed, for each point~$\by\in S^n\setminus P(K)$, we can define a piecewise constant function~$\lambda_{P,\by}(\bx)$ on~$S^n$, which will be called the \textit{characteristic function of~$P$ with respect to~$\by$}, by computing  the algebraic intersection number of a generic curve~$\gamma$ from~$\bx$ to~$\by$ with the $(n-1)$-dimensional cycle~$P(K)$. Then 
\begin{equation}\label{eq_V_Sn}
V_P(K)=\int_{S^n}\lambda_{P,\by}(\bx)\,dV_{S^n}(\bx)\pmod{\sigma_n\Z},
\end{equation}
where $dV_{S^n}(\bx)$ is the standard volume element in~$S^n$. The characteristic functions of~$P$ with respect to two  points~$\by_1$ and~$\by_2$ differ by an integral constant. Hence the corresponding integrals in the right-hand side of~\eqref{eq_V_Sn} differ by an element of~$\sigma_n\Z$. Thus the generalised oriented volume of~$P$ is well defined as an element of~$\R/\sigma_n\Z$.

The bellows conjecture is obviously not true for polyhedra in~$S^n$ if we allow them to contain two antipodal points. Indeed, consider a flexible polygon with non-constant area in the equatorial great sphere~$S^2\subset S^3$, and take the bipyramid (the suspension) over it with vertices at the poles of~$S^3$. We obtain a flexible polyhedron in~$S^3$ with non-constant volume. Iterating this construction, we obtain flexible polyhedra with non-constant volumes in the spheres~$S^n$ for all  $n\ge 3$. Hence the bellows conjecture for non-Euclidean spaces was usually formulated as follows.

\begin{conjecture}[The bellows conjecture for non-Euclidean spaces] 
The generalised oriented volume of any flexible polyhedron in either the Lobachevsky space~$\Lambda^n$ or the open hemisphere~$S^n_+$  is constant during the flexion, provided that $n\ge 3$. 
\end{conjecture}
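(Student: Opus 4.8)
The plan is to follow the Euclidean scheme of Section~\ref{section_BC} as far as it will go and to isolate the one genuinely new difficulty: the volume of a non-Euclidean simplex is not an algebraic function of its edge lengths, so Theorem~\ref{theorem_relation} cannot be applied directly. First, exactly as in the proof of Corollary~\ref{cor_BC}, passing to a simplicial subdivision reduces the statement to simplicial polyhedra; and since the generalised oriented volume varies continuously along a flexion and takes values in $\R$ (for $\Lambda^n$) or in $\R/\sigma_n\Z$ (for $S^n_+$), it is enough to prove that for a fixed combinatorial type $K$ and fixed edge lengths $\bell$ the volume of a flexible polyhedron $P_t\colon K\to X^n$ can take only finitely many values.

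Next I would pass from $X^n$ to the ambient linear space --- to $\R^{n+1}$ when $X^n=S^n_+$ and to the pseudo-Euclidean space $\R^{1,n}$ when $X^n=\Lambda^n$ --- and cone from the origin. Writing the (pseudo-)Euclidean volume element in coordinates adapted to the quadric $\langle\bx,\bx\rangle=1$, one gets $\operatorname{vol}_{X^n}(\Omega)=(n+1)\operatorname{vol}(0*\Omega)$ for the solid cone $0*\Omega$ over a region $\Omega$ cut out by $P(K)$. Choose a simplicial chain $\xi=\sum_k c_k\Delta_k$ in the abstract simplex on the vertex set of $K$ with $\partial\xi=[K]$; such a $\xi$ necessarily involves $n$-simplices $\Delta_k=[v_0^k\ldots v_n^k]$ not lying in $K$, since $\dim K=n-1$. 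Coning each pseudo-linear image $P(\Delta_k)$ over the origin gives
\begin{equation*}
\operatorname{vol}_{X^n}(\Omega)=(n+1)\sum_k c_k\,V_{\mathrm{or}}\bigl([\,0,P(v_0^k),\ldots,P(v_n^k)\,]\bigr)+(n+1)\sum_k c_k\,\beta_k\,,
\end{equation*}
to be read modulo $\sigma_n\Z$ in the spherical case. Here the first sum is a polynomial in the coordinates of the points $P(v)\in\R^{n+1}$ (resp.\ $\R^{1,n}$), whereas $\beta_k$ is the \textit{bulge} --- the volume of the lens between the flat simplex $[P(v_0^k),\ldots,P(v_n^k)]$ and the corresponding (pseudo-)spherical simplex --- and depends on $P(\Delta_k)$ only through the pairwise (pseudo-)scalar products $\langle P(v_i^k),P(v_j^k)\rangle$. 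The polynomial part is controlled exactly as in Section~\ref{section_BC}: one proves the analogue of Theorem~\ref{theorem_relation} over $\R^{1,n}$ and $\R^{n+1}$, which follows from the same argument, since after complexification the two signatures and the Euclidean one coincide, Lemma~\ref{lem_place} and Lemma~\ref{lem_collapse} are statements about places of fields of rational functions that do not see the real structure, and the extra quadratic equations $\langle\bx_v,\bx_v\rangle=1$ add only finitely many relations. Hence $\sum_k c_k\,V_{\mathrm{or}}([\,0,P_t(v_0^k),\ldots,P_t(v_n^k)\,])$ satisfies a fixed monic polynomial with coefficients depending only on $\bell$ and therefore takes finitely many values.

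The hard part will be the bulge terms $\sum_k c_k\beta_k$. Every $\Delta_k$ contains a diagonal of the polyhedron whose length varies during the flexion, so the individual $\beta_k$ are not constant, and $\beta_k$ is a genuinely transcendental function of the scalar products: for odd $n$ it involves the Schl\"afli function, while for even $n$ it is, by the Gauss--Bonnet theorem, a polynomial in the dihedral angles of the simplex, hence in arccosines of algebraic functions of the edge lengths. One must therefore show that, although its summands move, $\sum_k c_k\beta_k$ is constant along the flexion --- modulo $\sigma_n\Z$ in the spherical case. I expect this to be the crux and would try two routes. The first is infinitesimal: differentiate in $t$, use Schl\"afli's differential formula to rewrite $\frac{d}{dt}\beta_k$ through the $(n-2)$-volumes and the differentials of the dihedral angles of $P_t(\Delta_k)$, regroup the resulting double sum by codimension-$2$ faces, and show that the total coefficient of each $d\theta$ vanishes because $\partial\xi=[K]$ and the faces of $K$ are rigid; this should reduce, by downward induction on dimension, to the classical Euclidean Schl\"afli identity $\sum_F V_{n-2}(F)\,d\theta_F=0$ for the infinitesimal flex. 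The second is global: on the complexification $\Xi_{\C}$ of the flexion curve all dihedral angles are multivalued algebraic functions, so by Schl\"afli's formula $dV$ is a meromorphic $1$-form $\omega$ on the normalisation $\overline{\Xi_{\C}}$, and one would try to show that $\omega$ has vanishing residues and vanishing periods along the real flexion, so that $V$ is single-valued and bounded, hence constant; the obstacle here is to control $\omega$ at the points of $\overline{\Xi_{\C}}$ where $P_t$ degenerates. In both routes the new ingredient beyond the Euclidean case is the same --- understanding the non-algebraic part of the non-Euclidean volume along flexions --- and carrying this out is what I would regard as the main content of the proof.
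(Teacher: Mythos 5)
There is a fundamental problem before any detail of your argument: the statement you are trying to prove is a \emph{conjecture} that the paper does not prove --- on the contrary, it refutes the spherical half. Alexandrov~\cite{Ale97} constructed a flexible polyhedron in~$S^3_+$ with non-constant generalised oriented volume, and Theorem~4.2 of the paper (from~\cite{Gai15a}) produces \emph{embedded} flexible cross-polytopes in~$S^n_+$ with non-constant volumes for every $n\ge 3$. So no proof of the statement as formulated can exist, and any scheme that would establish constancy in~$S^n_+$ must break somewhere. In your scheme the break is exactly at the step you yourself flag as the crux: for fixed $K$ and~$\bell$ the volume in the counterexamples varies continuously through a continuum of values, so neither a finiteness-of-values statement nor the constancy of the bulge sum $\sum_k c_k\beta_k$ can hold in~$S^n_+$. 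Your first route fails concretely because after regrouping Schl\"afli's formula by codimension-$2$ faces of the chain~$\xi$, the faces that are diagonals of the polyhedron have varying $(n-2)$-volumes and unconstrained angle sums, so nothing telescopes; and the paper states explicitly that in non-Euclidean spaces there is no hope for a Sabitov-type analogue of Theorem~\ref{theorem_relation}, so the ``polynomial part controlled as in Section~4'' cannot be combined with a constant transcendental correction.

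What is actually true, and how the paper's references handle it, is much weaker than the conjecture: Theorem~\ref{theorem_BC_Lob} (from~\cite{Gai15b}) proves constancy only for \emph{bounded} flexible polyhedra in \emph{odd-dimensional} Lobachevsky spaces, and Theorem~\ref{theorem_small} (from~\cite{Gai16}) only for polyhedra with sufficiently small edge lengths in any~$X^n$. The method is not a volume decomposition into a flat part plus bulges; it is an analytic continuation of~$V_K$ to the complexifications of the Whitney strata of the configuration space $\Sigma_{X^n}(K,\bell)$, using the integrated Schl\"afli formula, the fact that $Q_F=\exp(i\alpha_F)$ restricts to polynomials on each stratum, a logarithmic growth bound on the imaginary part, and --- the hardest ingredient, available only for odd~$n$ --- the monodromy theorem that any analytic continuation of the simplex volume satisfies $\Re V_{\Lambda^n}'(G_0)=\pm V_{\Lambda^n}(G_0)$. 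The even-dimensional Lobachevsky case is open (your Gauss--Bonnet remark for even~$n$ does not settle it), and the spherical case is false. If you want a statement you can realistically attack along your lines, it is the odd-dimensional bounded hyperbolic case, but you should expect to need monodromy control of the non-algebraic part of the volume rather than a cancellation identity along the flexion.
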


In 1997 Alexandrov constructed an example of a flexible polyhedron in~$S^3_+$ with non-constant generalised oriented volume, which disproved the bellows conjecture in~$S^3_+$. So the general expectation was that the bellows conjecture is not true for any non-Euclidean space. This expectation was supported by the following result obtained by the author~\cite{Gai15a} using the classification of flexible cross-polytopes.

\begin{theorem}[\cite{Gai15a}]
For any $n\ge 3$, there exist embedded flexible cross-polytopes in~$S^n_+$ with non-constant volumes.
\end{theorem}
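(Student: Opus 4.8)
The plan is to exploit the explicit parametrizations~\eqref{eq_param1} and~\eqref{eq_param2} (and the full classification from~\cite{Gai13}) together with an averaging/continuity argument. First I would fix $n\ge 3$ and pick a concrete family of flexible cross-polytopes in $S^n_+$ whose flexion is given by one of the elliptic parametrizations; the analog of~\eqref{eq_param2} in the spherical case furnishes, for suitable choices of the modulus $k$, the phases $\sigma_i$, and the coefficients $\lambda_i$, a non-degenerate flexible cross-polytope $P_u$ with all vertices in the open hemisphere. The constraints on the parameters are open conditions (the Gram matrix of the normals must be degenerate positive semidefinite with nonzero proper principal minors, and all vertices must stay strictly inside $S^n_+$ along a nontrivial arc of the flexion), so a nonempty set of admissible parameters exists by the same reasoning sketched after~\eqref{eq_G}.

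Next I would compute the generalised oriented volume $V_{S^n}(P_u)$ as a function of the flexion parameter $u$. Because the vertices $\bb_i(u)$ are explicit elliptic (or rational) functions of $u$ and the vertices $\ba_i$ are fixed, the oriented volume of each top-dimensional simplex in a fixed generalised triangulation of the interior of $P_u$ is a real-analytic function of $u$; hence $V_{S^n}(P_u)$ is real-analytic. The crucial point is then to show it is \emph{non-constant}. For this I would argue by degeneration: as a phase parameter $\sigma_i$ or a coefficient $\lambda_i$ is tuned, the cross-polytope degenerates toward a suspension-type configuration (a bipyramid over a flexible spherical polygon of lower dimension) for which the volume visibly changes, by the same elementary construction used just before the statement to disprove the bellows conjecture for $S^n$ with antipodal points. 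Monitoring the leading-order behaviour of $V_{S^n}(P_u)$ in this degeneration shows that for generic admissible parameters the analytic function $u\mapsto V_{S^n}(P_u)$ cannot be identically constant; since a nonconstant real-analytic function takes infinitely many values on any interval, we are done once we also check that the cross-polytope stays embedded.

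The embeddedness is where I expect the real work. One needs to verify that the explicit $P_u$ is an \emph{embedded} (not merely immersed, non-degenerate) polyhedral sphere in $S^n_+$ for all $u$ in a nondegenerate subinterval. Here I would either invoke the construction in~\cite{Gai15a} of embedded flexible cross-polytopes in $S^n_+$ directly and track the volume along that specific family, or else isolate an open region in parameter space where embeddedness can be certified by a separating-hyperplane / convex-position argument: choose the parameters so that the $2n$ vertices are in sufficiently convex position on $S^n_+$ that the cross-polytope is the boundary of their spherical convex hull, and note that this is an open condition preserved along a short flexion arc. Combining embeddedness on such an arc with the non-constancy of the analytic volume function established above yields the theorem. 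The main obstacle, and the step requiring the most care, is reconciling the two demands simultaneously: the parameter choices that make the volume obviously non-constant (near the suspension degeneration) push the configuration toward self-intersection, while the choices that make embeddedness transparent (near convex position) risk making the volume locally constant to high order; the resolution is a careful two-parameter analysis showing that a common open region exists, which is exactly what the finer classification results of~\cite{Gai13} and~\cite{Gai15a} make possible.
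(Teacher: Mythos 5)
Your starting point is the right one (the result in \cite{Gai15a} is indeed extracted from the classification and explicit parametrizations of flexible cross-polytopes in \cite{Gai13}, whose spherical analogues of \eqref{eq_param1} and \eqref{eq_param2} are the relevant families), but note that the survey you are reading gives no proof of this theorem at all --- it is quoted from \cite{Gai15a} --- and your proposal defers precisely the two assertions that constitute the theorem. The more serious problem is that both of your fallback mechanisms for embeddedness fail. Invoking ``the construction in \cite{Gai15a}'' is circular: that construction \emph{is} the theorem. The separating-hyperplane/convex-position alternative is impossible in principle: if at some moment the cross-polytope were the boundary of the spherical convex hull of its vertices, it would be a simplicial polytope in convex position inside the open hemisphere, and such polytopes are infinitesimally rigid (the Cauchy--Dehn rigidity theory transfers to $S^n_+$ by the standard projective correspondence with $\R^n$), hence cannot lie on a positive-dimensional essential component of the configuration space. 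So ``an open region near convex position'' is empty for flexible cross-polytopes, and embeddedness must be certified by a genuinely non-convex argument tailored to an explicitly chosen family --- which is the main content of \cite{Gai15a}, not an openness remark.

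The non-constancy step is likewise asserted rather than proved. The degeneration you propose --- toward a bipyramid over a flexible spherical polygon --- is exactly the antipodal-suspension construction described before the statement, and it forces the two apexes to be antipodal poles, hence outside any open hemisphere; you give no argument that the closure of your admissible (embedded, hemisphere-confined) parameter region contains such configurations, nor that ``the volume visibly changes in the limit'' implies non-vanishing of the derivative of $u\mapsto V(P_u)$ for nearby non-degenerate parameters \emph{while} embeddedness is retained. You flag this tension yourself, but ``a careful two-parameter analysis shows a common open region exists'' is a restatement of the theorem, not a proof of it. In \cite{Gai15a} both properties are settled simultaneously by working with a concrete family produced by the classification: embeddedness is verified by explicit geometric analysis of that family, and non-constancy of the volume is checked directly along its flexion, rather than inferred from a limiting suspension. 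As it stands, your text is a plausible research plan whose two load-bearing steps are missing, and one of the two proposed routes to the harder step is provably a dead end.
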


So the bellows conjecture for~$S^n_+$ is false. The more surprising is that the bellows conjecture is true at least for odd-dimensional Lobachevsky spaces. 

\begin{theorem}[\cite{Gai15b}]\label{theorem_BC_Lob}
The generalised oriented volume of any bounded flexible polyhedron in the odd-dimensional Lobachevsky space~$\Lambda^{2k+1}$, where $k\ge 1$, is constant during the flexion.
\end{theorem}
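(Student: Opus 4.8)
The plan is to adapt the place-theoretic strategy behind Theorem~\ref{theorem_relation} to the hyperbolic setting, using the hyperboloid model $\Lambda^{2k+1}\subset\R^{1,2k+1}$ and exploiting the fact that in odd dimensions the volume of a hyperbolic simplex is a \emph{rational function} of the Gram matrix entries $\langle\bx_i,\bx_j\rangle$ (this is the key difference from the even-dimensional case, where the Schl\"afli differential and dilogarithmic transcendence obstruct such a formula). Concretely, I would first reduce, exactly as in the proof of Corollary~\ref{cor_BC}, to simplicial polyhedra $P\colon K\to\Lambda^{2k+1}$, since a simplicial subdivision only introduces new hinges and hence new flexions. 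Then, fixing a reference simplex $[w_1\ldots w_n]$ with $P(w_i)=\ba_i$ as in Section~3, I regard the coordinates $x_{v,s}$ of the remaining vertices as generating a field $E=\Q(\{x_{v,s}\})$, and I take $R$ to be the $\Q$-algebra generated by the quantities $\langle\bx_u,\bx_v\rangle$ over edges $[uv]$ of $K$ together with the quantities $\langle\bx_v,\bx_v\rangle$ (which equal $1$ on the hyperboloid, but whose ``squared-norm'' relatives enter the Gram-matrix algebra naturally). The goal becomes: show that the generalised oriented volume $V=V_K(P)\in E$ is integral over $R$, so that $V$ satisfies a monic polynomial over $R$, hence takes only finitely many values for fixed edge lengths, hence is constant along any flexion by continuity.

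To prove integrality via Lemma~\ref{lem_place}, I must show every place $\phi\colon E\to F\cup\{\infty\}$ that is finite on $R$ is finite on $V$. Here is where the odd-dimensional hyperbolic structure does the work. First I would establish a hyperbolic analogue of Lemma~\ref{lem_collapse}: if $\phi$ is finite on all the edge-inner-products $\langle\bx_u,\bx_v\rangle$ of $K$, then the clique complex $K_\phi$ of the graph recording which pairs $[ij]$ have $\phi\bigl(\langle\bx_i,\bx_j\rangle\bigr)\ne\infty$ collapses onto a subcomplex of dimension $<[n/2]=k$. Since $n-1=2k>k-1$, this forces $H_{n-1}(K_\phi)=0$, so a simplicial chain $\xi$ in the ambient simplex $\Delta$ with $\partial\xi=[K]$ can be chosen with support inside $K_\phi$; then $\phi$ is finite on $\langle\bx_u,\bx_v\rangle$ for every edge $[uv]$ of $\mathrm{supp}(\xi)$ (including diagonals). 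Because in odd dimension the oriented volume $V_{\mathrm{or}}(P(\Delta_\alpha))$ of each hyperbolic $n$-simplex $\Delta_\alpha$ occurring in $\xi$ is a rational function of the Gram entries $\langle\bx_i,\bx_j\rangle$ of its vertices --- all of which $\phi$ is now finite on, with the denominator being (a power of) the Gram determinant, which is nonzero generically and therefore not killed by $\phi$ --- the place $\phi$ is finite on each $V_{\mathrm{or}}(P(\Delta_\alpha))$, hence on the sum $V=\sum_\alpha c_\alpha V_{\mathrm{or}}(P(\Delta_\alpha))=V_K(P)$.

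The two substantive points, and the places I expect the real difficulty to lie, are the following. First, proving the hyperbolic collapsibility lemma: the Euclidean Lemma~\ref{lem_collapse} is a statement about places of $\Q(\{x_{i,s}\})$ detecting when squared distances blow up, and one needs to check that the argument is insensitive to replacing the Euclidean inner product with the Lorentzian one of signature $(1,2k+1)$, or else to run a parallel combinatorial-algebraic argument adapted to the constraint $\langle\bx_v,\bx_v\rangle=1$. I expect this to go through with care, since the relevant input is essentially linear-algebraic (rank conditions on Gram-type matrices) and the Cayley--Menger/Gram formalism has a clean Lorentzian counterpart. Second, and this is the genuine crux, one needs the classical but nontrivial fact that the volume of an $n$-dimensional hyperbolic (or spherical) simplex, for \emph{odd} $n$, is an elementary --- indeed rational-algebraic --- function of the dihedral angles, equivalently of the Gram matrix, with controlled poles. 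This is the odd-dimensional phenomenon (related to the vanishing of the relevant Dehn-type invariants / the fact that Schl\"afli's formula integrates to a rational function in odd dimensions, unlike the dilogarithms appearing in even dimensions), and it is precisely what makes the theorem fail to extend to $\Lambda^{2k}$. Assembling a proof of Theorem~\ref{theorem_BC_Lob} therefore amounts to: (i) the hyperbolic version of Lemma~\ref{lem_collapse}; (ii) the rational dependence of odd-dimensional hyperbolic simplex volume on the Gram matrix, with explicit denominator control; (iii) the place-theoretic packaging above; and (iv) the routine continuity argument for boundedness --- the hypothesis that $P_t$ be bounded is what guarantees $V$ is a well-defined finite real number rather than something involving ideal points, and it is used exactly where one passes from ``integral, hence finitely many values'' to ``continuous, hence constant.''
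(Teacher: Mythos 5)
There is a genuine gap, and it sits exactly at what you yourself identify as the crux: your claim (ii), that for odd $n$ the volume of a hyperbolic $n$-simplex is a rational (or even algebraic) function of the Gram matrix entries $\langle\bx_i,\bx_j\rangle$, is false. Already for $n=3$ (odd!) the volume of a hyperbolic tetrahedron is a transcendental function of its dihedral angles or edge lengths, expressed through dilogarithms/Lobachevsky functions; Schl\"afli's formula does not integrate to anything rational in any dimension $\ge 3$. The parity distinction you invoke (Dehn-type vanishing, ``rational in odd, dilogarithmic in even'') does not exist in this form. Consequently the whole place-theoretic packaging collapses: Lemma~\ref{lem_place} requires the element $V$ to lie in the field $E=\Q(\{x_{v,s}\})$ and to be compared against places finite on the edge data, but in $\Lambda^{2k+1}$ the simplex volume is not even an element of $E$, so no monic polynomial relation and no ``finitely many values for fixed $\bell$'' conclusion can be extracted this way. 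The paper is explicit on this point: in non-Euclidean spaces there is no hope for an analog of Theorem~\ref{theorem_relation}, which is precisely why a different method is needed. (Your point (i), a Lorentzian analogue of Lemma~\ref{lem_collapse}, is not where the trouble lies.)

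The actual proof of Theorem~\ref{theorem_BC_Lob} proceeds by analytic continuation rather than integrality. One takes the Whitney stratification of the configuration space $\Sigma=\Sigma_{\Lambda^n}(K,\bell)$ and, on each stratum $S$, integrates Schl\"afli's formula to get $V_K=-\frac{1}{n-1}\sum_F V_F\,\alpha_F+\mathrm{const}$, using that the face volumes $V_F$ are constant on $S$. Since $Q_F=\exp(i\alpha_F)$ restricts to a polynomial on $S$, the formula $V_K=\frac{i}{n-1}\sum_F V_F\Log Q_F+\mathrm{const}$ continues $V_K$ to a multi-valued analytic function on a Zariski open subset of the complexification $S_{\C}$, whose branches differ by real constants and whose imaginary part grows at most logarithmically. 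The genuinely hard step --- and the only place where odd-dimensionality enters --- is a monodromy theorem for the volume of a simplex in $\Lambda^{2k+1}$: after analytic continuation along any loop in the space of complex Gram matrices, the real part of the continued volume equals $\pm$ the original volume, so the real part is single-valued as well. A single-valued holomorphic function on a complex affine variety with at most logarithmic growth of its imaginary part must be constant, whence $V_K$ is constant on each stratum and therefore on each connected component of $\Sigma$. If you want to salvage your write-up, the monodromy statement is the ingredient you would have to prove in place of the (false) rationality claim; it is a statement about the multivalued transcendental volume function, not about its algebraicity.
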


Unlike the Euclidean spaces, in the Lobachevsky spaces there are unbounded flexible polyhedra of finite volume that have some vertices on the absolute. 

\begin{problem}
Is the bellows conjecture  true for unbounded flexible polyhedra in odd-dimensional  Lobachevsky spaces?
\end{problem}

Also, it is still unknown if the bellows conjecture is true for even-dimensional Lobachevsky spaces.

Theorem~\ref{theorem_BC_Lob} makes plausible that certain weaker form of the bellows conjecture for spheres is still true despite of existing counterexamples to the initial version of the conjecture. The following result by the author~\cite{Gai16} shows that the bellows conjecture holds true for all sufficiently small polyhedra in all non-Euclidean spaces.

\begin{theorem}[\cite{Gai16}]\label{theorem_small}
Let $X^n$ be either~$S^n$ or~$\Lambda^n,$ $n\ge 3$. Let $P_t\colon K\to X^n$ be a simplicial flexible polyhedron with $m$ vertices such that all edges of~$P_t$ have lengths smaller than $2^{-m^2(n+4)}$. Then the generalized oriented volume of~$P_t$ is constant during the flexion.
\end{theorem}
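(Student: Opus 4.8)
The plan is to reduce the non-Euclidean statement to the Euclidean one of Theorem~\ref{theorem_relation} via a careful embedding of a small non-Euclidean polyhedron into Euclidean space that preserves ``volume constancy'' to sufficient accuracy, or — more robustly — to mimic the place-theoretic argument of Section~\ref{section_BC} directly over the non-Euclidean coordinate rings. I would pursue the second route. First I would set up the configuration space as in Section~2: for $X^n=S^n$ or $\Lambda^n$, the polyhedron is a point of $\R^{(m-n)(n+1)}$ cut out by equations~\eqref{eq_system1} and~\eqref{eq_system2}, and the generalised oriented volume $V$ is an algebraic function of the coordinates $x_{v,s}$ (the volume of a geodesic simplex in $X^n$ is not rational in the vertex coordinates, so one must first show $V$ is at least \emph{algebraic} over $\Q(\{x_{v,s}\})$, which follows from the Schläfli differential formula together with the fact that the dihedral angles are algebraic over this field). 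Then, working in $E=\overline{\Q(\{x_{v,s}\})}$ (or a suitable finite extension), I would try to show that every place $\phi$ of $E$ that is finite on all $\langle\bx_u,\bx_v\rangle=c(\ell_{uv})$ and on all $\langle\bx_v,\bx_v\rangle=1$ is finite on $V$; by Lemma~\ref{lem_place} this gives a monic polynomial relation for $V$ with coefficients in the ring generated by the $c(\ell_{uv})$, and then the continuity-plus-finiteness argument of Corollary~\ref{cor_BC} forces $V$ to be locally constant along a flexion.

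The role of the smallness hypothesis $\ell<2^{-m^2(n+4)}$ is the crux. Unlike the Euclidean case, the non-Euclidean volume is not a polynomial, and worse, the analog of Lemma~\ref{lem_collapse} need not give a clique complex of the right dimension once we pass to the non-Euclidean Gram data; the counterexamples in $S^n_+$ show that no purely formal place argument can work globally. The idea is that for sufficiently small edge lengths the non-Euclidean polyhedron is a small perturbation of a Euclidean one: writing $c(\ell)=1\pm\ell^2/2+O(\ell^4)$, the defining equations~\eqref{eq_system2} are, to leading order in $\ell$, exactly the Euclidean edge-length equations~\eqref{eq_system}, and the non-Euclidean volume differs from the Euclidean volume of the ``same'' vertex data by an explicit convergent power series in the squared edge and diagonal lengths. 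The plan is to make this rigorous: express $V_{X^n}(P)$ as $V_{\R^n}(\tilde P)\cdot(1+\text{(series in the }q_{uv}\text{ and in the squared diagonal lengths)})$, where $\tilde P$ is the Euclidean polyhedron with the same combinatorics whose squared edge lengths $\tilde q_{uv}$ are the appropriate analytic functions of the $c(\ell_{uv})$. One then feeds the Euclidean Sabitov relation for $\tilde P$ — which exists by Theorem~\ref{theorem_relation} — into this identity. The smallness bound $2^{-m^2(n+4)}$ is exactly what is needed to guarantee that all the relevant power series (in particular the one expressing the squared diagonals of $\tilde P$ in terms of the edge data, controlled by the Cayley–Menger / Sabitov machinery, whose degree is bounded in terms of $m$ and $n$) converge on the polydisc determined by the edge lengths, so that the substitution is legitimate and produces a genuine monic algebraic relation $V^{2N}+b_1(\bell)V^{2N-2}+\cdots+b_N(\bell)=0$ with coefficients that are convergent power series — hence continuous functions — of the fixed edge lengths.

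Granting such a relation, the conclusion is immediate by the same argument as in Corollary~\ref{cor_BC}: along a flexion $P_t$ the edge lengths $\bell$ are constant, so the polynomial $V^{2N}+b_1(\bell)V^{2N-2}+\cdots+b_N(\bell)$ has fixed coefficients and finitely many roots, while $t\mapsto V_{X^n}(P_t)$ is continuous; therefore it is constant. I would organize the write-up as: (i) reduce to simplicial $K$ and to a single combinatorial type; (ii) establish the analytic volume identity relating $V_{X^n}$ to $V_{\R^n}$ of the associated Euclidean configuration, with explicit radius-of-convergence estimates; (iii) invoke Theorem~\ref{theorem_relation} for the Euclidean side and substitute, tracking degrees to pin down $N$ and to verify the smallness bound suffices; (iv) conclude via continuity.

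The main obstacle I anticipate is step~(ii): controlling the squared diagonal lengths. In the Euclidean Sabitov relation the diagonals are eliminated, but in the volume \emph{identity} I need them as explicit analytic functions of the edge data, and a priori the Cayley–Menger system determining them can be badly conditioned (the configuration can be near-degenerate even when the edges are tiny). The bound $2^{-m^2(n+4)}$ must be shown to rule this out — concretely, one needs a quantitative statement that a simplicial complex with $m$ vertices and edge lengths below this threshold has all its realizations uniformly bounded away from the bad locus where the diagonal-reconstruction series diverge, with the exponent $m^2(n+4)$ coming from a crude product of: the number of Cayley–Menger equations ($\sim m^2$), the degree of the Sabitov polynomial in each variable, and a factor absorbing the $O(\ell^4)$ corrections in $n$ dimensions. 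Getting this combinatorial/analytic estimate clean, rather than merely existential, is where the real work lies.
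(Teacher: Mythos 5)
Your proposal has a genuine gap, in fact two, and both occur at the load-bearing steps. First, the place-theoretic route is blocked at the very start: the generalised oriented volume of a polyhedron in $S^n$ or $\Lambda^n$ is \emph{not} algebraic over $\Q(\{x_{v,s}\})$. Schl\"afli's formula only gives a differential relation; integrating it produces transcendental functions (already the volume of a single tetrahedron in $\Lambda^3$ or $S^3$ is a dilogarithm/Lobachevsky-function expression in the vertex data), so there is no field extension in which Lemma~\ref{lem_place} could be applied to $V$, and no monic polynomial relation of the type of Theorem~\ref{theorem_relation} can be expected. The survey states this explicitly: in non-Euclidean spaces there is no hope of a reasonable analogue of Theorem~\ref{theorem_relation}. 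Second, your fallback perturbative identity $V_{X^n}(P)=V_{\R^n}(\tilde P)\cdot(1+\text{series in the }q_{uv}\text{ and squared diagonals})$ cannot be converted into a relation with coefficients depending on $\bell$ alone, because the squared diagonals of a \emph{flexible} polyhedron genuinely vary while the edge lengths stay fixed; they are not analytic (or even finitely-many-valued) functions of the edge data, and no Cayley--Menger elimination can make them so --- if the diagonal dependence cancelled or were controlled by $\bell$, that would essentially be the statement you are trying to prove. Consequently the coefficients $b_j$ in your would-be relation vary along the flexion and the ``finitely many roots plus continuity'' argument of Corollary~\ref{cor_BC} does not apply. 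The smallness bound $2^{-m^2(n+4)}$ is not a radius-of-convergence condition for such series.

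The paper's actual mechanism is different: both Theorem~\ref{theorem_BC_Lob} and Theorem~\ref{theorem_small} are proved by analytically continuing the volume function $V_K$ from the real configuration space $\Sigma=\Sigma_{X^n}(K,\bell)$ to the complexifications $S_{\C}$ of the strata of its Whitney stratification. On each stratum the face volumes $V_F$ are constant, so Schl\"afli's formula integrates to~\eqref{eq_integr}; the functions $Q_F=\exp(i\alpha_F)$ restrict to polynomials, giving a multi-valued continuation of $V_K$ whose branches differ by real constants and whose imaginary part has at most logarithmic growth. Constancy of $V_K$ on each stratum then follows from controlling the real monodromy, and it is precisely here --- not in any series substitution --- that the explicit smallness hypothesis on the edge lengths enters in~\cite{Gai16}, replacing the monodromy theorem available only for odd-dimensional Lobachevsky space. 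This difference of mechanism also explains why the theorem can be true for small polyhedra while counterexamples with larger edges exist in $S^n_+$, something a combinatorial-type-wide Sabitov relation could not accommodate.
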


\begin{problem}
Suppose that $n\ge 3$. Does there exist a constant $\varepsilon_{X^n}>0$ depending only on the space~$X^n$ such that the generalized oriented volumes of all flexible polyhedra in~$X^n$ of diameters less than~$\varepsilon_{X^n}$ are constant during the flexion?
\end{problem}

Notice that the author's classification of flexible cross-polytopes~\cite{Gai13} implies that in each space~$X^n$ there exist flexible cross-polytopes with arbitrarily small edge lengths, hence, the assertion of Theorem~\ref{theorem_small} is not empty. 

In non-Euclidean spaces there is no hope to obtain any reasonable analog of Theorem~\ref{theorem_relation} providing a way to compute the volume of a simplicial polyhedron from its edge lengths, since even in the simplest case of a tetrahedron in~$\Lambda^3$ or~$S^3$ all known formulae for the volume from the edge lengths are very complicated. The proofs of Theorems~\ref{theorem_BC_Lob} and~\ref{theorem_small} are based on the study of the analytic continuation of the volume function~$V_K$ defined on the configuration space~$\Sigma=\Sigma_{X^n}(K,\bell)$ to the complexification of~$\Sigma$. We  focus on the proof of Theorem~\ref{theorem_BC_Lob}.
More precisely, we take the canonical stratification of~$\Sigma$ built by Whitney~\cite{Whi57} for any real affine variety, and then continue~$V_K$ separately to the complexifications~$S_{\C}$ of all strata~$S$.

The differential of the volume of a non-degenerate polyhedron $P\colon K\to\Lambda^n$ that is deformed preserving its combinatorial type  is given by Schl\"afli's formula
\begin{equation}\label{eq_Schlafli}
dV_K(P)=-\frac{1}{n-1}\sum_{F}V_F(P)\,d\alpha_F(P),
\end{equation}
where the sum is taken over all $(n-2)$-dimensional simplices $F$ of~$K$, $V_F(P)$ is the $(n-2)$-dimensional volume of the face~$P(F)$, and~$\alpha_F(P)$ is the dihedral angle of~$P$ at the face~$P(F)$. Since every~$V_F$ restricted to~$S$ is a constant, we can integrate equation~\eqref{eq_Schlafli} and obtain that the following equality holds on~$S$:
\begin{equation}\label{eq_integr}
V_K(P)=-\frac{1}{n-1}\sum_{F}V_F(P)\alpha_F(P)+\mathrm{const}.
\end{equation}
It can be checked that the restrictinons to~$S$ of the functions~$Q_F=\exp(i\alpha_F)$  are polynomials, hence, the same polynomials can be considered as functions on~$S_{\C}$. Then the formula
\begin{equation*}
V_K(P)=\frac{i}{n-1}\sum_{F}V_F(P)\Log Q_F(P)+\mathrm{const}
\end{equation*}
yields the analytic continuation of~$V_K$ to a multi-valued analytic function on a Zariski open subset of~$S_{\C}$ such that any two branches of this multi-valued  function differ by a real constant, and the (single-valued) imaginary part of this function has a not more than logarithmic growth. The hardest part of our proof of Theorem~\ref{theorem_BC_Lob} is the proof of the fact that the real part of the obtained analytic function is also single-valued, which is based on the following theorem. 

\begin{theorem}[\cite{Gai15b}]
Suppose that $n$ is odd. Let $V_{\Lambda^n}(G)$ be the function expressing the volume of a bounded simplex in~$\Lambda^n$ from the Gram matrix~$G$ of its vertices. Let~$G_0$ be the Gram matrix of vertices of a non-degenerate simplex in~$\Lambda^n$, and let~$\gamma$ be a closed path in the space of symmetric complex $(n+1)\times(n+1)$ matrices with units on the diagonal such that both endpoints of~$\gamma$ coincide with~$G_0$. Assume that the function $V_{\Lambda^n}(G)$ admits the analytic continuation along~$\gamma$, and let $V_{\Lambda^n}'(G)$ be the holomorphic function in a neighborhood of~$G^0$ obtained after this analytic continuation. Then $\Re V_{\Lambda^n}'(G^0)=\pm V_{\Lambda^n}(G^0)$.
\end{theorem}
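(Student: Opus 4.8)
The plan is to argue by induction on the odd dimension~$n$, with Schl\"afli's differential formula as the main tool. I regard $V_{\Lambda^n}=V_{\Lambda^n}(G)$ as a multivalued holomorphic function on the complement of its branch hypersurface~$\mathcal D_n$ inside the affine space~$\mathcal G_n$ of symmetric complex $(n+1)\times(n+1)$ matrices with units on the diagonal; here $\mathcal D_n$ is the locus on which~$G$ fails to be the Gram matrix of a non-degenerate simplex, roughly the zero set of~$\det G$ together with those of the principal minors. The assertion is then a statement about the monodromy representation of this local system, and I would in fact prove the slightly sharper claim that \emph{the monodromy along any loop in $\mathcal G_n\setminus\mathcal D_n$ carries the distinguished real branch~$V_{\Lambda^n}$ to a branch of the form $\varepsilon\,V_{\Lambda^n}+ic$, where $\varepsilon\in\{\pm1\}$ and $c$ is a real constant.} Since $G_0$ is a real matrix lying off~$\mathcal D_n$ (the simplex being bounded), evaluating at~$G_0$ and taking real parts gives $\Re V_{\Lambda^n}'(G_0)=\varepsilon V_{\Lambda^n}(G_0)=\pm V_{\Lambda^n}(G_0)$, which is the statement. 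The parity of~$n$ enters exactly here: for odd~$n$ the Schl\"afli recursion expressing an $n$-dimensional volume through $(n-2)$-dimensional ones terminates, after iteration, at the level $n=1$ (edge lengths, i.e.\ $\operatorname{arccosh}$'s of Gram entries), and never passes through the level $n=2$, where the Gauss--Bonnet term $\pi-\sum\alpha_i$ would contribute a genuine real additive monodromy; this is precisely why the statement has no analogue of this form in even dimensions.

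For the base case $n=1$ a ``simplex'' is a segment, $G$ is the $2\times2$ matrix with off-diagonal entry~$z$, and $V_{\Lambda^1}(G)=\operatorname{arccosh} z=\Log\bigl(z+\sqrt{z^2-1}\bigr)$, with branch points at $z=\pm1$ and $z=\infty$. A loop around~$z=1$ or $z=-1$ flips~$\sqrt{z^2-1}$ and hence sends $\Log(z+\sqrt{z^2-1})$ to $\Log\bigl(z-\sqrt{z^2-1}\bigr)=-\Log(z+\sqrt{z^2-1})+2\pi i m$, acting by $V\mapsto-V+2\pi i m$; a loop that only winds the outer~$\Log$ acts by $V\mapsto V+2\pi i m$. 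In all cases the monodromy is of the asserted shape. For the inductive step, fix odd $n\ge3$ and assume the sharper claim for~$\Lambda^{n-2}$, applied to each $(n-2)$-dimensional face, whose Gram matrix is a principal submatrix~$G_F$ of~$G$ (and for which the relevant induced loop stays off~$\mathcal D_{n-2}$, since a degeneration of a face forces one of the full simplex). Schl\"afli's formula
\[
 dV_{\Lambda^n}=-\frac{1}{n-1}\sum_{F}V_F\,d\alpha_F,\qquad V_F=V_{\Lambda^{n-2}}(G_F),
\]
the sum running over the $(n-2)$-faces~$F$ with~$\alpha_F$ the dihedral angle at~$F$, continues to an identity of multivalued holomorphic functions on $\mathcal G_n\setminus\mathcal D_n$. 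Since $\cos\alpha_F$ is algebraic over the field of rational functions of the entries of~$G$ (a cofactor of~$G$ divided by the square root of a product of principal cofactors), the function $Q_F:=e^{i\alpha_F}$ satisfies a quadratic over that field, so $\alpha_F=\frac{1}{i}\Log Q_F$ has monodromy $\alpha_F\mapsto\eta_F\alpha_F+\pi p_F$ with $\eta_F\in\{\pm1\}$ and $p_F\in\Z$. Thus both ingredients of $\omega:=dV_{\Lambda^n}$ have explicitly controlled monodromy, and the jump of~$V_{\Lambda^n}$ along a loop~$\gamma$ equals the period $\oint_\gamma\omega$, which I would evaluate by contracting~$\gamma$ to a product of small loops around smooth points of the components of~$\mathcal D_n$.

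The hard part is to show that these periods have the permitted form, i.e.\ that they contribute only a sign on~$V_{\Lambda^n}$ and a \emph{purely imaginary} additive constant. I would obtain this from a local analysis at a generic point of each component of~$\mathcal D_n$, where two basic degenerations arise: (i)~the simplex stays $n$-dimensional but ``folds through'' a facet (a principal cofactor changes sign), near which~$V_{\Lambda^n}$ behaves, after extracting the local square root, like~$\operatorname{arccosh}$ near~$\pm1$, giving local monodromy $V\mapsto-V$ together with the sign changes and imaginary shifts fed in from the faces by the inductive hypothesis; and (ii)~a logarithmic degeneration (a vertex approaching the light cone), where the volume stays finite but acquires a logarithmic branch point, and computing the residue of~$\omega$ gives monodromy $V\mapsto V+2\pi i\,\rho$ with~$\rho$ a $\Z$-linear combination of lower face-volumes that is \emph{real} at the base point~$G_0$. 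The delicate step --- the one I expect to be the main obstacle --- is the bookkeeping of the $\pi p_F$ shifts of the angles inside Schl\"afli's formula: a term $\pi p_F V_F$ is a real quantity proportional to a face-volume, not a constant, and one must verify that after summing over~$F$ and invoking the recursive structure (each~$V_F$ itself being, by induction, of controlled monodromy type) together with the combinatorics of the face poset, all such terms either cancel in pairs or regroup into the allowed $\pm V_{\Lambda^n}+i\R$. Finally, since~$V_{\Lambda^n}$ is single-valued and real on the open set of genuine bounded real simplices, any monodromy factoring through that set is trivial, which removes the remaining ambiguity and completes the induction.
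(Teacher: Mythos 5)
Your overall strategy---treating $V_{\Lambda^n}$ as a multivalued function of the Gram matrix, controlling its monodromy by integrating the analytically continued Schl\"afli form, inducting on the odd dimension down to $n=1$, and locating the role of parity in the fact that the recursion never passes through dimension~$2$---is in the spirit of the analytic-continuation analysis carried out in~\cite{Gai15b} (the present survey only states the theorem and refers there for the proof). But as written the proposal has a genuine gap, and indeed an internal inconsistency, exactly at the point you yourself flag as the main obstacle. The ``sharper claim'' you set out to prove, that every branch has the form $\varepsilon V_{\Lambda^n}+ic$ with $c$ a \emph{constant}, is contradicted by your own local model~(ii), where the jump is $2\pi i\rho$ with $\rho$ a nonconstant real-analytic combination of face volumes; the most one can hope for is that $V'\mp V_{\Lambda^n}$ takes purely imaginary values on the real locus, and this weaker statement cannot simply be fed back into the induction: to integrate the continued Schl\"afli form along $\gamma$ you must know which branch of each $V_F$ and of each $\alpha_F$ you are on at every moment of the loop, not merely the value of $\Re V_F$ at the real base point. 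Consequently the decisive step---showing that the real, nonconstant contributions $\pi p_F V_F$, together with the real parts generated when $\eta_F=-1$ interacts with branch changes of the $V_F$, cancel or reassemble into $\pm V_{\Lambda^n}$---is essentially the content of the theorem, and it is left as unverified ``bookkeeping''. The closing appeal to single-valuedness of $V_{\Lambda^n}$ on the set of genuine bounded real simplices does not repair this, because the loops in question live in the complexified matrix space and need not be homotopic into that set.

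Several supporting assertions are also nontrivial and unproved: that the branch locus of the full analytic continuation is contained in your $\mathcal{D}_n$ (the theorem allows any path along which continuation exists, so the divisor must be identified, not decreed); that near a generic point of each component the volume behaves like $\operatorname{arccosh}$ with local monodromy $V\mapsto -V$, or has a logarithmic point whose residue is a $\Z$-combination of face volumes; and that reducing $\gamma$ to small loops about smooth points of $\mathcal{D}_n$ is harmless, even though the conjugating paths change which branches of $V_F$ and $\alpha_F$ enter each local computation. Each of these is plausible, but together they constitute the actual proof rather than routine verification; as it stands the proposal is a reasonable plan, not a proof.
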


A non-constant single-valued holomorphic function on a complex affine algebraic variety cannot have a not more than  logarithmic growth of the imaginary part. Therefore the analytic continuation of~$V_K$ to~$S_{\C}$ is  constant, hence, $V_K$ is constant on~$S$. Thus it is constant on every connected component of~$\Sigma$.

\frenchspacing

\end{document}